\documentclass{amsart}
\usepackage{amssymb,amsfonts,amsmath}
\usepackage[all]{xy}
\usepackage[shortlabels]{enumitem}
\usepackage{hyperref, color}		
\newtheorem{theorem}{Theorem}[section]
\newtheorem{proposition}[theorem]{Proposition}

\newtheorem{corollary}[theorem]{Corollary}
\newtheorem{definition}{Definition}[section]
\newtheorem{lemma}[theorem]{Lemma}

\newtheorem{claim}{Claim}[section]

\theoremstyle{theoremA}
\newtheorem*{theoremA}{Theorem A}
\theoremstyle{theoremB}
\newtheorem*{theoremB}{Theorem B}  
%%%%%%%%%%%%%%%%%%%%% Macros
\newcommand{\real}{\mathbb{R}}

\newcommand{\Si}{\Sigma}

\newcommand{\te}{\theta}

\newcommand{\la}{\lambda}

\newcommand{\al}{\alpha}
\newcommand{\om}{\omega}
\newcommand{\Om}{\Omega}
\newcommand{\na}{\nabla}
\newcommand{\ep}{\epsilon}
\newcommand{\ga}{\gamma}

\newcommand{\be}{\beta}

\newcommand{\De}{\Delta}
\newcommand{\de}{\delta}

\newcommand{\lan}{\langle}
\newcommand{\ran}{\rangle}

\newcommand{\p}{\partial}

\newcommand{\dv}{\mathrm{div}}
\newcommand{\m}{\mathcal}

\newcommand{\vp}{\varphi}
\newcommand{\hs}{\textrm{Hess}}
%%%%%%%%%%%%%%%%%%%%%%%%%%%%
%
\begin{document}

%%%%%%%%%%%%%%%%%%%%%%%%%%%%%%%%%%%%%%%%%%%%%

\title[The positive mass Theorem and Penrose inequality for graphs]{The positive mass Theorem and Penrose inequality for graphical manifolds}
\thanks{The second author was partially supported by CNPq-Brazil}

\author{H. Mirandola}

\author{F. Vit\'orio}

\date{}
\subjclass[2010]{Primary 58G30; Secondary 53C40}

\maketitle
\thispagestyle{empty}
\begin{abstract}  %In this note we extend some recent works about the positive mass theorem and Penrose inequality for graphical manifolds of arbitrary dimension and codimension. 
We give, via elementary methods, explicit formulas for the ADM mass which allow us to conclude the positive mass theorem and Penrose inequality for a class of graphical manifolds  which includes, for instance, that ones with flat normal bundle.
\end{abstract}
\section{Introduction}
A smooth connected $n$-dimensional Riemannian manifold $(M^n,g)$ is said to be {\it asymptotically flat} if there exists a compact subset $K$ of $M$ and a diffeomorphism $\Phi:M\setminus K\to \real^n \setminus \{|x|\leq 1\}$ such that in this coordinate chart the metric $g(x)=g_{ij}(x)dx_i\otimes dx_j$, with $x=(x_1,\ldots,x_n) \in \real^n\setminus \{|x|\leq 1\}$, satisfies
\begin{equation*}
\begin{array}{ll}
g_{ij}-\de_{ij} = O(|x|^{-p})\,, & g_{ijk}=O(|x|^{-p-1})\\
g_{ijkl} = O(|x|^{-p-2}) \,, & S= O(|x|^{-q}),
\end{array}
\end{equation*}
at infinity, where $|x|=\sqrt{x_1^2+\ldots+x_n^2}$ and $g_{ijk}$, $g_{ijkl}$ denote the partial derivatives of $g_{ij}$,
\begin{equation} 
g_{ijk}=\frac{\p g_{ij}}{\p x^k} \ \ \mbox{ and } \ \ g_{ijkl}=\frac{\p^2 g_{ij}}{\p x^k \p x^l},
\end{equation}
for all $1\leq i,j,k\leq n$. Here $S$ is the scalar curvature, $\de_{ij}$ is the Kronecker delta,  and $p>\frac{n-2}{2}$ and $q>n$ are constants.
\begin{definition}\label{defadmmass} The ADM mass of a manifold $(M,g)$ is the limit 
\begin{equation}\label{admmass}
m_{ADM} = \lim_{r\to \infty} \frac{1}{2(n-1)\om_{n-1}}\int_{S_r}(g_{iji}-g_{iij}){\nu_j} d\mu, 
\end{equation}
where $S_r =\{x\in \real^n \mid |x|=r\}$ is the coordinate sphere of radius $r$, $d\mu$ is the area element of $S_r$ in the coordinate chart, $\om_{n-1}$ is the volume of the unit sphere $S_1$ and $\nu=(\nu_1,\ldots,\nu_n)=r^{-1}x$ is the outward unit normal to $S_r$. Henceforth, all the repeated indices are being summed as usual.
\end{definition}
It is worthwhile reminder that definition \ref{defadmmass} was given \cite{adm} by the physics Arnowitt, Deser  and  Misner, for the tridimensional case, and 
Bartnik \cite{ba} proved that for an asymptotically flat manifold the limit (\ref{admmass}) exists and the definition of the ADM mass of $g$ is independent of  the choice an asymptotically flat chart $\Phi$, hence the ADM mass is an geometric invariant of $(M,g)$. The positivity of the ADM mass in all dimensions is a long-standing question and a pillar of the mathematical relativity. In a seminal work, Schoen and Yau \cite{sy1} gave an affirmative answer for the tridimensional case. In a follow-up paper, they also proved for dimensions $3\le n\le 7$ (see \cite{sy2}). For manifolds that are conformally flat or spin affirmative answers are given by Schoen-Yau \cite{sy3} and Witten \cite{wi}, respectively. The Riemannian positive mass theorem can be stated as
\begin{theoremA}[\cite{sy1,sy2,sy3,wi}] Let $M^n$ be an asymptotically flat manifold with nonnegative scalar curvature. Assume that $M$ is spin, or $3\le n\le 7$, or $M$ is conformally flat. Then the ADM mass is positive unless $M$ is isometric to  $\real^n$. 
\end{theoremA}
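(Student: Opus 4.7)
The plan is to establish Theorem A by treating its three cases --- spin, $3 \le n \le 7$, and conformally flat --- with distinct techniques, since no single approach covers all three. I would first carry out Witten's spinor argument, which is the cleanest and handles the spin case in every dimension. The setup is: on the spin bundle over $(M,g)$, take the Dirac operator $D$ and use the Lichnerowicz--Weitzenb\"ock formula
\[
D^{2} = \nabla^{*}\nabla + \tfrac{S}{4}.
\]
Using weighted Sobolev spaces on the asymptotic end, one solves $D\psi = 0$ with $\psi \to \psi_{0}$ at infinity for an arbitrary fixed constant spinor $\psi_{0}$; existence follows from Fredholm theory for $D$ between the appropriate weighted spaces together with an index calculation. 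Integrating the Lichnerowicz identity against $\psi$ over the region enclosed by the coordinate sphere $S_{r}$ and applying Stokes' theorem produces
\[
\int_{M} \bigl(|\nabla \psi|^{2} + \tfrac{S}{4}|\psi|^{2}\bigr)\,dV
= \lim_{r\to \infty} \oint_{S_{r}} \lan \nabla_{\nu} \psi + \nu \cdot D\psi,\, \psi \ran \, d\mu.
\]
A careful expansion of this boundary integrand in an asymptotically flat chart shows that the right-hand side is a positive multiple of $\om_{n-1}\, m_{ADM}\, |\psi_{0}|^{2}$; this is Witten's identity. Non-negativity of $S$ then forces $m_{ADM}\ge 0$, and in the equality case $\nabla \psi \equiv 0$, so varying $\psi_{0}$ produces a full basis of parallel spinors, which gives Ricci-flatness; combined with the asymptotic condition, one concludes that $M$ is isometric to $\real^{n}$.

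For the non-spin case in dimensions $3 \le n \le 7$, I would switch to Schoen--Yau's minimal hypersurface argument. Suppose by contradiction that $m_{ADM} < 0$. One constructs a properly embedded, area-minimizing hypersurface $\Si \subset M$ asymptotic to a coordinate hyperplane in the end; the negative-mass assumption supplies the barriers that make the limiting Plateau problem solvable, while the bound $n \le 7$ ensures regularity of the minimizer. Stability of $\Si$ combined with the Gauss equation and $S \ge 0$ forces the induced metric on $\Si$ to admit a conformal deformation to an asymptotically flat scalar-flat metric with nonpositive mass, which allows the whole scheme to be run inductively on dimension; the base case $n=3$ is closed off by applying Gauss--Bonnet to a complete stable minimal surface in a $3$-manifold with nonnegative scalar curvature. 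The conformally flat case is treated directly: writing $g = u^{\frac{4}{n-2}}\de$ converts $S \ge 0$ into a linear elliptic inequality for $u$, and the ADM mass equals, up to a dimensional constant, the coefficient of $|x|^{2-n}$ in the expansion of $u$ at infinity, so positivity follows from the maximum principle, with $u \equiv 1$ in the equality case.

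The main obstacle throughout is the boundary analysis at infinity. In Witten's proof, extracting the precise multiple of $m_{ADM}$ from the spinor boundary integrand requires a delicate Taylor expansion that uses both the decay rate $g_{ij} - \de_{ij} = O(|x|^{-p})$ with $p > (n-2)/2$ and the cancellation of slower-decaying terms after symmetrizing the covariant derivative of $\psi$. In Schoen--Yau's scheme, the hard part is controlling the asymptotic geometry of the minimizer $\Si$ well enough to compare the ADM masses of $M$ and $\Si$ and thus make the induction go through; this requires sharp decay estimates on the second fundamental form. In every case the rigidity clause demands that an integral equality be promoted to the pointwise vanishing of a natural geometric object --- a parallel spinor, or a totally geodesic transverse slicing, or a constant conformal factor --- and then integrated back to a global isometry with $\real^{n}$.
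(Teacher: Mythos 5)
You should first note a mismatch of scope: the paper does not prove Theorem A at all. It is quoted as background, with the proof attributed to Schoen--Yau \cite{sy1,sy2,sy3} and Witten \cite{wi}; the authors' own arguments concern only the special case of graphical submanifolds of $\real^{n+m}$, which they handle by the entirely elementary divergence identity $S+S^\perp=\na\cdot X$ of Proposition \ref{scalarcurvature-divX} and the flux computation of Theorem \ref{mass-graph} --- no spinors, no minimal hypersurfaces. So there is no in-paper proof to compare yours against, and your proposal is really a survey of the cited literature rather than an alternative route through this paper's machinery.

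As such a survey it identifies the right strategies, but it is a roadmap rather than a proof: essentially every sentence defers a substantial theorem (Fredholm theory for the Dirac operator between weighted Sobolev spaces and the positivity of Witten's boundary term; existence, regularity for $n\le 7$, and the sharp asymptotics of the area-minimizing hypersurface needed to compare the masses of $M$ and $\Si$ in the dimension-reduction step). Two points are genuinely imprecise. In the rigidity clause of the spin case, a basis of parallel spinors forces the full curvature tensor to vanish (the curvature operator acts trivially on the spinor bundle), not merely $\ric=0$; Ricci-flatness alone plus asymptotic flatness does not immediately yield $M\cong\real^n$, so you should state the stronger conclusion. And the conformally flat case of \cite{sy3} concerns \emph{locally} conformally flat manifolds and rests on the injectivity of the developing map into $\Sp^n$ and the analysis of the resulting Kleinian group; your global ansatz $g=u^{4/(n-2)}\de$ with $\De u\le 0$ and the maximum principle only covers the case where the metric is globally conformal to the flat metric on $\real^n$, which is strictly weaker than what the cited theorem asserts.
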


The Riemannian Penrose conjecture asserts that any asymptotically flat manifold $M$ with nonnegative scalar curvature containing an outermost minimal hypersurface  (possibly disconnected) of area $A$ has ADM mass satisfying 
\begin{equation}\label{admpenroseinq}
m_{ADM}\ge \frac{1}{2} \left(\frac{A}{\om_{n-1}}\right)^{\frac{n-2}{n-1}}\!\!\!\!.
\end{equation}
Furthermore, the equality in (\ref{admpenroseinq}) implies that $M$ is isometric to the Riemannian Schwarzschild manifold. We want to point out that this inequality was first proved in the three-dimensional case by Huisken and Ilmanen \cite{hi} under the additional hypothesis that $\Si$ is connected.  Bray \cite{br} proved this conjecture, still in dimension three, without connectedness assumption on $\Si$. For $3\le n\le 7$, this conjecture was proved by Bray and Lee \cite{bl}, with the extra requirement that $M$ be spin for the rigidity statement. The Riemannian Penrose inequality can be stated as 
\begin{theoremB}[\cite{hi,br,bl}] Let $M^n$ be an asymptotically flat manifold with nonnegative scalar curvature. Assume that $3\le n\le 7$ and there exists an outermost minimal hypersurface $\Si\subset M$ with area $A$. Then 
\begin{equation*}
m_{ADM}\ge \frac{1}{2} \left(\frac{A}{\om_{n-1}}\right)^{\frac{n-2}{n-1}}\!\!\!\!.
\end{equation*}
Moreover, under the hypothesis that $M$ is spin then the equality occurs if and only if $M$ is the Riemannian Schwarzschild manifold.
\end{theoremB}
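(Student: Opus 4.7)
Since Theorem B is stated as a known result due to Huisken-Ilmanen, Bray, and Bray-Lee, I would follow Bray's conformal flow of metrics, which is the approach that extends to the full range $3\le n\le 7$. The plan is: given an asymptotically flat $(M,g_0)$ with nonnegative scalar curvature and outermost minimal hypersurface $\Si_0$ of area $A$, construct a one-parameter family of conformally related metrics $g_t = u_t^{4/(n-2)} g_0$ that preserves both the horizon area and the nonnegativity of scalar curvature, along which the ADM mass is monotonically nonincreasing, and which converges to a Schwarzschild metric. Since equality in (\ref{admpenroseinq}) holds on Schwarzschild, comparing the mass at $t=0$ with the mass at $t=\infty$ yields the inequality.

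\textbf{Construction of the flow.} At each time $t$, let $\Si_t$ be the outermost minimal hypersurface of $(M,g_t)$. Define $v_t$ as the (appropriately normalized) harmonic function on the exterior region of $\Si_t$ which vanishes on $\Si_t$ and tends to a fixed limit at infinity, extended by zero inside $\Si_t$. Set $\partial u_t/\partial t = v_t$ with $u_0\equiv 1$. The key structural properties to verify are: (i) the conformal change formula together with the harmonicity of $v_t$ forces the scalar curvature of $g_t$ to remain nonnegative; (ii) $\Si_t$ is still an outermost minimal hypersurface of $g_t$, and its area is preserved; (iii) the family $\Si_t$ sweeps out the exterior region as $t\to\infty$.

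\textbf{Monotonicity and convergence.} The two analytic pillars are: (a) $t\mapsto m_{ADM}(g_t)$ is nonincreasing, which follows from a boundary integral computation using Green's identities and the invariance of the horizon area; (b) after suitable rescaling, $g_t$ converges as $t\to\infty$ to a Riemannian Schwarzschild metric of mass $m_\infty$ with horizon area $A$, so that $m_\infty = \tfrac12(A/\om_{n-1})^{(n-2)/(n-1)}$. The inequality follows from $m_{ADM}(g_0)\ge m_\infty$. For the rigidity case, equality forces the flow to be stationary and $(M,g_0)$ to already realize equality in the mass comparison; the assumption that $M$ is spin then allows one to invoke Witten's proof of the positive mass theorem with boundary to conclude isometry with Schwarzschild.

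\textbf{Main obstacles.} The most delicate point is the low regularity of the flow: the outermost horizon $\Si_t$ can suddenly jump outward when distinct minimal surfaces combine into a larger enclosing one, and the existence theory must accommodate these jumps while preserving area and scalar curvature bounds; this is where geometric measure theory enters essentially. A second serious hurdle is the long-time convergence to Schwarzschild, which requires a quantitative version of the positive mass theorem (Theorem A) applied to the limit metric to exclude a mass drop in the limit -- this is precisely why the argument is currently limited to the range $3\le n\le 7$ where Theorem A is available without the spin hypothesis. The alternative approach of Huisken-Ilmanen, based on monotonicity of the Hawking mass along weak inverse mean curvature flow, is in dimension three simpler analytically but only treats connected horizons.
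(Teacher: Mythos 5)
Theorem B is not proved in this paper at all: it is quoted verbatim from the literature (Huisken--Ilmanen, Bray, Bray--Lee) and serves only as background for the authors' own, much more restricted results. So there is no ``paper's proof'' to compare against. Your sketch is a faithful outline of the route actually taken in the cited references for $3\le n\le 7$, namely Bray's conformal flow $g_t=u_t^{4/(n-2)}g_0$ with $\partial u_t/\partial t=v_t$ harmonic outside the current horizon, constancy of the horizon area, monotonicity of the ADM mass, convergence to Schwarzschild, and the use of the positive mass theorem (whence the dimension restriction) together with the spin hypothesis for rigidity; you also correctly flag the genuinely hard points (horizon jumps, long-time convergence) and the limitation of the Huisken--Ilmanen inverse mean curvature flow alternative to connected horizons in dimension three. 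What the paper itself does is entirely different and far more elementary: it proves a Penrose-type inequality only for \emph{graphical} manifolds of arbitrary codimension with flat normal bundle (Theorem \ref{main-theorem}), by writing $S+S^\perp$ as an explicit divergence $\na\cdot X$ (Proposition \ref{scalarcurvature-divX}), integrating by parts to get a formula for $m_{ADM}$ as a bulk integral plus a boundary term $\int_\Si H^\Si$ (Theorems \ref{mass-graph}--\ref{cor-penrose-graph}), and then invoking the Guan--Li Alexandrov--Fenchel inequality (Proposition \ref{guanli-af}). The trade-off is the usual one: your (i.e.\ Bray's) approach gives the general theorem but requires heavy geometric analysis, while the paper's approach is completely elementary but applies only to graphs and does not recover the rigidity statement (the authors explicitly leave that as an open question). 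As a proposal for proving the general Theorem B your outline is the right strategy, though of course each of steps (i)--(iii), (a) and (b) is itself a substantial theorem in Bray's and Bray--Lee's papers rather than something that can be verified in a few lines.
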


Recently, Lam \cite{lam} obtained an elementary and straightforward proof of the positive mass theorem and the Penrose inequality for codimension one graphical manifolds, which
was extended in some sense to hypersurfaces by Huang and Wu in \cite{huang,huang2,huang3} and, for more general codimension one graphs, by de Lima and Gir\~ao in \cite{levi,levi2}. This paper deals with graphical manifolds with arbitrary codimensions. We give here, via elementary methods, an explicit formula for the ADM mass which allow us to conclude the positive mass theorem and Penrose inequality for a class of graphical manifolds which includes, for instance, that ones with flat normal bundle. We bring to the fore that graphical manifolds with flat normal bundle are subject of study in several recent works, see for example  \cite{jx}, \cite{mu}, \cite{swx} and references therein.

To enunciate our theorems we will start with some notations and definitions.

\begin{definition} A $C^2$ map $f:\real^n\setminus\Om\to \real^m$, where $\Om\subset \real^n$ is a subset, is said to be {\it asymptotically flat} if the scalar curvature  $S$ of the graph of $f$ endowed with the natural metric is an integrable function over $\real^n$ and moreover the partial derivatives $f_i^\al=\frac{\p f^\al}{\p x_i}$ and $f_{ij}^\al=\frac{\p^2 f^\al}{\p x_i\p x_j}$  satisfy 
\begin{equation*}
\begin{array}{ll}
|f^\al_i(x)|=O(|x|^{-\frac{p}{2}});\ \ \
|f^\al_{ij}(x)|=O(|x|^{-\frac{p}{2}-1});
%\ \ \  
%|f^\al_{ijk}(x)|=O(|x|^{-\frac{p}{2}-2});
\end{array}
\end{equation*}
at infinity, for all $\al=1,\ldots,m$ and $i,j,k=1,\ldots,n$, where $p>(n-2)/2$. 
\end{definition}
Let $M=\{(x,f(x))\mid x\in \real^n\}$ be the graph of an asymptotically flat map $f:\real^n\to \real^m$ endowed with the natural metric. %Note that the others conditions to $(M,g)$ be an asymptotically flat manifold may be fail unless $f$ also satisfies $|f^\al_{ijk}(x)|=O(|x|^{-\frac{p}{2}-2})$.
%
%Consider  $M$ as an isometric embedded submanifold in $\real^{n+m}$ by the natural parameterization. 
The vectors $\p_i=(e_i, f_i^\al e_\al)$ form the coordinate vector fields and the vectors $\eta^\al=(-Df^\al,e_\al)$, where $Df^\al$ denotes the gradient vector field of $f^\al$, form a basis of the normal bundle of $M$. Here $e_i$ and $e_\al$ denotes the canonical vectors of $\real^n$ and $\real^m$, respectively. 
The natural metric $g=g_{ij}dx^i \otimes dx^j$ of $M$ is given by
\begin{equation}\label{graph-metric}
g_{ij}=\de_{ij}+f_i^\al f_j^\al,
\end{equation}
hence 
$g_{ij} = O(|x|^{-p})$ and $g_{ijk}=O(|x|^{-p-1})$.

By abuse of notation, let us consider that the functions $f^\al$ are also defined on $M$ by identifying  $f^\al=f^\al\circ \pi$, where $\pi:M\to \real^n$ is the natural projection $\pi(x,f(x))=x$, for all $x\in\real^n$. The gradient vector field of $f^\al:M\to\real$ satisfies 
\begin{equation}\label{grad-f}
\na f^\al = g^{jk}f^\al_k \p_j,
\end{equation}
where the matrix $(g^{ij})$ denotes the inverse matrix $(g_{ij})^{-1}$.

Let $S:\real^n\to \real$ be the scalar curvature of $M$ and $S^\perp:\real^n\to \real$ the function given by
\begin{equation}\label{normal-scalar}
S^\perp = \lan R^\perp(\na f^\al,\na f^\be)\eta^\be,\eta^\al \ran,
\end{equation}
where $R^\perp$ denotes the normal curvature tensor of the submanifold $M\subset \real^{n+m}$. 

In the three theorems below, we will state explicit formulas for the ADM mass. As consequence, we will derive the Riemannian positive mass and Penrose inequalities for graphical manifolds with flat normal bundle.

\begin{theorem}\label{mass-graph} Let $M^n$ be a graph of an asymptotically flat map $f:\real^n\to \real^m$ endowed with its natural metric $g=g_{ij}dx^i\otimes dx^j$.  Then the ADM mass of $M$ satisfy
\begin{equation*}
m_{ADM} = \frac{1}{2(n-1)\om_{n-1}}\int_M (S+S^\perp) \frac{1}{\sqrt G}dM,
\end{equation*} 
where $G$ is the determinant of the metric coefficient matrix $(g_{ij})$.
\end{theorem}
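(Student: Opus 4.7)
The plan is to apply the Euclidean divergence theorem directly on $\real^n$ to convert the boundary integral in the definition of $m_{ADM}$ into a bulk integral, and then match its integrand against $S+S^\perp$ (viewed on $\real^n$ via the graph identification). Note that $dM=\sqrt G\,dx$ already identifies $\int_M(S+S^\perp)dM/\sqrt G$ with $\int_{\real^n}(S+S^\perp)dx$, so the target reduces to a Euclidean divergence identity.

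For the first step, I compute the mass aspect directly from $g_{ij}=\de_{ij}+f_i^\al f_j^\al$. Differentiating gives $g_{ij,k}=f_{ik}^\al f_j^\al+f_i^\al f_{jk}^\al$, hence
\[
X_j:=g_{iji}-g_{iij}=f_{ii}^\al f_j^\al-f_i^\al f_{ij}^\al.
\]
Applying the divergence theorem on $B_r\subset\real^n$ and using the symmetry of mixed third partials (which cancels the third-derivative cross terms) yields
\[
\int_{S_r}X_j\nu_j\,d\mu=\int_{B_r}\p_j X_j\,dx,\qquad \p_j X_j=\sum_\al\!\bigl((\De f^\al)^2-|\hs f^\al|^2\bigr).
\]

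Next I expand $S+S^\perp$ using Gauss and Ricci. A short projection calculation shows that the second fundamental form is $A(\p_i,\p_j)=G^{\al\be}f_{ij}^\be\eta_\al$, where $G^{\al\be}$ is the inverse of the Gram matrix $G_{\al\be}=\de_{\al\be}+f_k^\al f_k^\be$ of the normal basis $\{\eta^\al\}$; consequently $\langle A_{ij},A_{kl}\rangle=G^{\al\be}f_{ij}^\al f_{kl}^\be$. The Gauss equation (ambient flat) gives
\[
S=G^{\al\be}\bigl(\tr W^\al\,\tr W^\be-\tr(W^\al W^\be)\bigr),\qquad (W^\al)^i_j=g^{ik}f_{kj}^\al,
\]
and the Ricci equation $\langle R^\perp(X,Y)\xi,\eta\rangle=\langle[\ti A_\xi,\ti A_\eta]X,Y\rangle$ combined with $\na f^\al=g^{jk}f_k^\al\p_j$ produces an analogous trace expression for $S^\perp$ in the $W^\al$'s. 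Substituting the Sherman--Morrison identity $g^{ij}=\de^{ij}-f_i^\ga G^{\ga\de}f_j^\de$ everywhere and systematically collecting terms, one shows pointwise on $\real^n$ that $S+S^\perp=\p_j X_j+\p_j Y_j$, with $Y_j$ built from products of $f_i^\al$ and $f_{ij}^\al$ against rational functions of the Gram matrix; under $p>(n-2)/2$ this remainder decays fast enough that $\lim_{r\to\infty}\int_{S_r}Y_j\nu_j\,d\mu=0$.

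Assembling the two divergence identities and letting $r\to\infty$ gives the stated formula. The main obstacle is the algebraic step above: because $g^{ij}$ and $G^{\al\be}$ depend rationally on $Df$, naive expansion of $S+S^\perp$ produces many cross-terms, and exhibiting the precise cancellations that collapse everything to the ``Euclidean'' quadratic $\sum_\al\bigl((\De f^\al)^2-|\hs f^\al|^2\bigr)$ modulo a controlled divergence demands careful bookkeeping. As a sanity check, note that $S^\perp$ vanishes identically when the normal bundle is flat, in which case the identity reduces to an arbitrary-codimension analogue of Lam's codimension-one formula; this suggests organizing the proof by first isolating the ``flat normal bundle'' contribution to $S$ and then treating the $S^\perp$ terms as the necessary correction coming from the interplay between the shape operators $W^\al$.
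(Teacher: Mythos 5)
Your overall strategy is exactly the paper's: identify $\int_M(S+S^\perp)\frac{1}{\sqrt G}dM$ with $\int_{\real^n}(S+S^\perp)dx$, recognize the mass aspect $g_{iji}-g_{iij}=f_{ii}^\al f_j^\al-f_i^\al f_{ij}^\al$, and convert the flux integral in the definition of $m_{ADM}$ into a bulk integral via the Euclidean divergence theorem. Your computation $\p_jX_j=\sum_\al((\De f^\al)^2-|\hs f^\al|^2)$ is correct, your asymptotic bookkeeping ($2p+1>n-1$, $2p+2>n$) matches the paper's, and your identification of the Gram matrix $G_{\al\be}=\de_{\al\be}+f_k^\al f_k^\be$ with what the paper calls $U_{\al\be}$, together with $B(\p_i,\p_j)=U^{\al\be}f_{ij}^\be\eta^\al$, is the right setup.

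The genuine gap is that the one step carrying all the mathematical content is asserted rather than proved. You write that ``one shows pointwise on $\real^n$ that $S+S^\perp=\p_jX_j+\p_jY_j$'' with $Y_j$ a controlled remainder, and then concede that ``exhibiting the precise cancellations\ldots demands careful bookkeeping.'' That identity is precisely Proposition \ref{scalarcurvature-divX}, and it is where the paper does essentially all of its work: writing $g^{ij}=\de_{ij}-M_{ij}$ and expanding $S=(\de_{ij}-M_{ij})(\de_{kl}-M_{kl})U^{\al\be}(f_{ij}^\be f_{kl}^\al-f_{ik}^\be f_{jl}^\al)$, the $\de\de$ term gives your Euclidean divergence, but the cross terms $\de M$ and the $MM$ term do \emph{not} individually organize into divergences of decaying fields; after the manipulations of Lemma \ref{prelim2} there survives a genuinely non-divergence remainder $V_{ik}^{\al\be}F_{ik}^{\be\al}$, which is only disposed of because the Ricci equation identifies it with $-S^\perp$ (equation (\ref{Sperp-coord})). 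In other words, the fact that the ``leftover'' is exactly the normal-curvature term $S^\perp$ (rather than some other quadratic expression in the shape operators) is the theorem; without verifying that cancellation your argument proves nothing beyond the codimension-one case, where $S^\perp\equiv 0$ and the remainder vanishes for symmetry reasons. To close the gap you would need to carry out the index computation you deferred, i.e.\ reproduce the content of Lemmas \ref{prelim1}--\ref{prelim2} and the identification $V_{ik}^{\al\be}F_{ik}^{\be\al}=\lan R^\perp(\na f^\ga,\na f^\mu)\eta^\mu,\eta^\ga\ran$.
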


\begin{theorem}\label{penrose-graph} Let $\Om\subset \real^n$ be a bounded open subset  with Lipschitz boundary $\p\Om$. Let $f:\real^n\setminus \Om \to \real^m$ be an 
asymptotically flat map. 
Assume that $f$ is constant along each connected component of $\Si=\p\Om$. Let $M$ be the graph of $f$ with its natural metric. 
Then,
\begin{equation*}
m_{ADM}= \frac{1}{2(n-1)\om_{n-1}}\left(\int_M (S+S^\perp) \frac{1}{\sqrt G}dM + \int_\Si \frac{|Df|^2}{1+|Df|^2} H^\Si d\Si\right),
\end{equation*}
where $|Df|^2=|Df^1|^2+\ldots+|Df^m|^2$ and $H^\Si$ is the mean curvature of the hypersurface $\Si$ in the Euclidean space $\real^n$ in the direction to the unit vector field $\nu$ pointing outward to $\Om$. 
\end{theorem}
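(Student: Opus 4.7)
The plan is to repeat the derivation behind Theorem~\ref{mass-graph}, but on the punctured region $B_r\setminus\overline{\Om}$ instead of $B_r$; this produces an extra inner boundary integral on $\Si$ that we then evaluate using the assumption that $f$ is constant on each component of $\Si$.

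The proof of Theorem~\ref{mass-graph} produces a vector field $V$ on $\real^n\setminus\Om$ (constructed explicitly from $f$, $Df$ and $D^2f$) enjoying two properties: pointwise, $\dv V\,dx = (S+S^\perp)/\sqrt G\,dM$, and $\int_{S_r}V\cdot\hat r\,d\mu\to 2(n-1)\om_{n-1}\,m_{ADM}$ as $r\to\infty$. Applying the Euclidean divergence theorem to $V$ on $B_r\setminus\overline{\Om}$ yields
\begin{equation*}
\int_{S_r}V\cdot\hat r\,d\mu \;-\; \int_\Si V\cdot\nu\,d\Si \;=\; \int_{B_r\setminus\Om}(S+S^\perp)\,dx,
\end{equation*}
the minus sign arising because $\nu$ (outward to $\Om$) is inward to $B_r\setminus\overline{\Om}$ along $\Si$. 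As $r\to\infty$, the integrability of $S+S^\perp$ on $M$ (part of the asymptotic flatness hypothesis) justifies passing to the limit, and we obtain
\begin{equation*}
2(n-1)\om_{n-1}\,m_{ADM} \;=\; \int_M (S+S^\perp)\frac{1}{\sqrt G}\,dM \;+\; \int_\Si V\cdot\nu\,d\Si.
\end{equation*}
It thus suffices to verify that $V\cdot\nu|_\Si = \frac{|Df|^2}{1+|Df|^2}\,H^\Si$.

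The constancy of $f^\al$ on each component of $\Si$ forces its tangential gradient to vanish, so $Df^\al = f^\al_\nu\,\nu$ there, with $f^\al_\nu := Df^\al\cdot\nu$. Consequently $f^\al_i = f^\al_\nu\,\nu_i$ and $|Df|^2 = \sum_\al(f^\al_\nu)^2$, while the induced metric $g_{ij}|_\Si = \de_{ij}+|Df|^2\nu_i\nu_j$ has eigenvalues $1$ (with multiplicity $n-1$) and $1+|Df|^2$, so $G|_\Si = 1+|Df|^2$; this is where the denominator in the stated formula will appear. Differentiating $f^\al(\ga(t))\equiv\mathrm{const}$ twice along a $\Si$-geodesic $\ga$ with initial velocity $X\in T\Si$, and using that $\na^{\real^n}_{\ga'}\ga'|_0$ is normal to $\Si$, yields $\hs f^\al(X,X) = -f^\al_\nu\,\lan\na^{\real^n}_X X,\nu\ran$; tracing over an orthonormal frame of $T\Si$ and reconciling with the sign convention in which $H^\Si > 0$ for convex $\Om$ with outward $\nu$ produces the kinematic identity $\tr_\Si\hs f^\al = H^\Si\,f^\al_\nu$. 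Substituting this, together with $Df^\al = f^\al_\nu\,\nu$, into the restriction $V\cdot\nu|_\Si$ (which the derivation of Theorem~\ref{mass-graph} expresses, after appropriate divergence corrections, essentially as $(1+|Df|^2)^{-1}\bigl[(\De f^\al)f^\al_\nu - f^\al_\nu\,\hs f^\al(\nu,\nu)\bigr]$), the normal-Hessian pieces cancel and one is left with $(1+|Df|^2)^{-1}H^\Si\sum_\al(f^\al_\nu)^2 = \frac{|Df|^2}{1+|Df|^2}H^\Si$, which is exactly the boundary density in the stated formula.

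The hardest step is this final pointwise identification on $\Si$: one must extract $V$ from the proof of Theorem~\ref{mass-graph}, track the higher-codimension corrections involving $R^\perp(\na f^\al,\na f^\be)$ (most of which collapse once $Df^\al$ is forced parallel to $\nu$), and fix the sign convention for $H^\Si$. Everything else---the divergence theorem on the annulus, the limit $r\to\infty$, and the kinematic identities above---is straightforward once the explicit form of $V$ is in hand.
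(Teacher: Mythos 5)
Your proposal is correct and follows essentially the same route as the paper: the authors apply the Euclidean divergence theorem on $B_r\setminus\overline{\Om}$ to the vector field $X$ of Proposition \ref{scalarcurvature-divX}, note that the antisymmetric (normal-curvature) part of $X$ vanishes on $\Si$ because all $Df^\al$ become parallel to $\nu$, and evaluate $\lan X,\nu\ran$ via $\De f^\al=\De_\Si f^\al+\hs_{f^\al}(\nu,\nu)-H^\Si\lan\nu,Df^\al\ran$ together with the explicit inverse $U^{\al\be}=\de_{\al\be}-\la^\al\la^\be/(1+|\la|^2)$, which yields exactly your boundary density $\frac{|Df|^2}{1+|Df|^2}H^\Si$. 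The only caveat is the one you already flag: the contraction on $\Si$ is against the full matrix $U^{\al\be}$ rather than the scalar $(1+|Df|^2)^{-1}\de_{\al\be}$ (though $U^{\al\be}\la^\al\la^\be=|Df|^2/(1+|Df|^2)$ gives the same result), and the sign of $H^\Si$ must be fixed consistently with the outward choice of $\nu$.
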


\begin{theorem}\label{cor-penrose-graph} Let $\Om\subset \real^n$ be an open subset. Let $f:\real^n\setminus \Om\to \real^m$ be a continuous map that is constant along each connected component of the boundary $\Si=\p\Om$ and  asymptotically flat in $\real^n\setminus\bar\Om$.  Assume  that the graph $M$ of $f$ extends $C^2$ up to its boundary $\p M$. Assume further that  $\lim_{x\to\p\Om} S^\perp=0$ and, along each connected component $\Si_i$ of $\p M$, the manifold $\bar M$ is tangent to the cylinder $\Si\times \ell_i$, where $\ell_i$ is a straight line of $\real^m$.  Then,  
\begin{equation*}
m_{ADM}= \frac{1}{2(n-1)\om_{n-1}}\left(\int_M (S+S^\perp) \frac{1}{\sqrt G}dM + \int_\Si  
H^\Si \, d\Si\right),
\end{equation*}
where $H^\Si$ is the mean curvature of the hypersurface $\Si\subset\real^n$ in the direction of the unit vector field $\nu$ pointing outward to $\Om$.
\end{theorem}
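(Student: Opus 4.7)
The strategy is to deduce Theorem \ref{cor-penrose-graph} from Theorem \ref{penrose-graph} by truncation. The cylinder tangency forces $|Df|\to\infty$ as $x\to\Sigma$ (since the unit inward normal to $\Sigma_i$ in $\bar M$ must be $(0,v_i)$, with $v_i$ a unit direction of $\ell_i$), so $f$ is only continuous on $\Sigma$ and Theorem \ref{penrose-graph} cannot be applied directly to $f$ on $\real^n\setminus\Omega$. Instead, I will cut off a thin neighbourhood of $\partial M$ inside $\bar M$, apply the mass identity on the truncated graph, and pass to the limit as the collar shrinks.

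For the approximation I use the $C^2$ regularity of $\bar M$: pick a $C^2$ defining function $\rho:\bar M\to[0,\infty)$ with $\rho^{-1}(0)=\partial M$ and $|\nabla^{\bar M}\rho|=1$ on $\partial M$. For small $\epsilon>0$, $\Gamma_\epsilon:=\{\rho=\epsilon\}\subset M$ is a smooth hypersurface, and its projection $\Sigma_\epsilon:=\pi(\Gamma_\epsilon)\subset\real^n$ is a smooth Lipschitz hypersurface bounding an open set $\Omega_\epsilon\supset\Omega$ with $\Sigma_\epsilon\to\Sigma$ as $\epsilon\to0$. Note that $f$ is $C^2$ on $\Sigma_\epsilon$ but in general \emph{not} constant there, so Theorem \ref{penrose-graph} does not apply verbatim. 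Its proof, however, rests on a Stokes identity that does apply: for any smooth domain $\Omega'\supset\bar\Omega$,
\[
 2(n-1)\omega_{n-1}\,m_{ADM}=\int_{M'}(S+S^\perp)\tfrac{1}{\sqrt G}\,dM+\int_{\partial\Omega'}\mathcal{B}(x)\,d\sigma,
\]
with $M'$ the graph of $f|_{\real^n\setminus\Omega'}$ and $\mathcal{B}$ the explicit boundary integrand from the proof of Theorem \ref{penrose-graph}, which collapses to $\frac{|Df|^2}{1+|Df|^2}H^{\partial\Omega'}$ precisely when $f$ is constant on $\partial\Omega'$. Applying this with $\Omega'=\Omega_\epsilon$, the interior integral converges to $\int_M(S+S^\perp)/\sqrt G\,dM$ as $\epsilon\to 0$ by dominated convergence, using integrability of $S$ (from asymptotic flatness) together with the hypothesis $\lim_{x\to\Sigma}S^\perp=0$.

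The crux is to show $\int_{\Sigma_\epsilon}\mathcal{B}\,d\sigma\to\int_\Sigma H^\Sigma\,d\Sigma$. Parametrize a collar of $\Sigma_i$ in $\bar M$ by $\psi(y,s)=(X(y,s),Y(y,s))$ for $(y,s)\in\Sigma\times[0,\delta)$, with $X(y,0)=y$, $Y(y,0)=c_i$, $\partial_sX(y,0)=0$, $\partial_sY(y,0)=v_i$; $C^2$-regularity then gives $X(y,s)=y+O(s^2)$ and $Y(y,s)=c_i+sv_i+O(s^2)$. Inverting (with $\tau=\mathrm{dist}(x,\Sigma)$) yields $s\asymp\sqrt\tau$, so on $\Sigma_\epsilon$ the normal component of $Df$ blows up like $\epsilon^{-1/2}v_i$ while the tangential components remain $O(1)$ (modelled on the hyperboloid $f(x)=\sqrt{|x|^2-1}$). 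Substituting into $\mathcal{B}$ and collecting leading orders, the divergent pieces cancel and the integrand converges pointwise to $H^{\Sigma_\epsilon}\to H^\Sigma$; the additional contributions from the tangential variation of $f$ along $\Sigma_\epsilon$ are $o(1)$, and together with $d\sigma|_{\Sigma_\epsilon}\to d\Sigma$ this delivers the claimed limit.

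The main obstacle is this last asymptotic analysis. In Theorem \ref{penrose-graph} the boundary integrand simplifies at once to $\frac{|Df|^2}{1+|Df|^2}H^\Sigma$, but in our setting one must keep the extra terms in $\mathcal{B}$ coming from the non-constancy of $f$ along $\Sigma_\epsilon$ and show they vanish in the limit. The cylinder-tangency hypothesis fixes both the direction and the precise leading order of the blow-up of $Df$, which is exactly the information needed to carry out the expansion and extract $H^\Sigma$.
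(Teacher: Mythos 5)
Your overall strategy (reduce to Theorem \ref{penrose-graph} by an approximation and pass to the limit) is the right one, but you approximate the \emph{domain} where the paper approximates the \emph{map}, and this choice is exactly what creates a gap you do not close. The paper keeps the domain $\real^n\setminus\Om$ fixed and replaces $f$ by smooth maps $F^k$ that still vanish identically on $\Si$, agree with $f$ outside a compact set, have $(f^{2;k},\ldots,f^{m;k})$ vanishing near $\Si$, and whose graphs converge to $\bar M$ in $C^2$. Then Theorem \ref{penrose-graph} applies verbatim to each $F^k$, the boundary integrand is exactly $\frac{|DF^k|^2}{1+|DF^k|^2}H^\Si$ on the \emph{fixed} surface $\Si$, and the $C^2$ convergence to the cylinder-tangent $\bar M$ forces $|DF^k|\to\infty$ on $\Si$, so that factor tends to $1$ and the boundary term tends to $\int_\Si H^\Si$ with no asymptotic expansion needed. (The identification of the vertical direction and the fact that only one component's gradient blows up — the paper's Claims 5.1 and 5.2, which you partially reproduce — are what make such $F^k$ constructible.)

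In your truncation, $f$ is not constant on $\Si_\ep$, so the boundary integrand $\mathcal{B}=\lan X,\nu\ran$ does \emph{not} collapse: the cancellation $\hs_{f^\al}(\nu,\nu)\lan Df^\be,\nu\ran-\hs_{f^\al}(Df^\be,\nu)=0$ in the proof of Theorem \ref{penrose-graph} uses precisely that $Df^\be$ is normal to the boundary, and on $\Si_\ep$ it is not. What remains are terms like $U^{\al\be}\,\De_{\Si_\ep}f^\al\,\lan Df^\be,\nu\ran$ and $U^{\al\be}\,\hs_{f^\al}((Df^\be)^T,\nu)$, in which (with $\rho=\ep$, hence $\mathrm{dist}(x,\Si)\sim\ep^2$) one has $|Df|\sim\ep^{-1}$ — not $\ep^{-1/2}$ as you write — and the normal-normal Hessian is of order $\ep^{-3}$, only partially compensated by $U^{11}\sim\ep^{2}$. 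The assertion that "the divergent pieces cancel" and the tangential contributions are $o(1)$ is the entire analytic content of the theorem beyond Theorem \ref{penrose-graph}, and it is stated rather than proved; moreover, even granted pointwise convergence of the integrand, exchanging the limit with the integral over the \emph{varying} surfaces $\Si_\ep$ needs a uniform bound or a pullback to a fixed reference surface, which you also do not supply. As written this is a genuine gap; either carry out that expansion in full (tracking the exact rates dictated by the cylinder tangency), or switch to the paper's map-approximation, which sidesteps it entirely.
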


As a consequence of Theorem \ref{mass-graph} it follows the Riemannian positive mass inequality for graphs  whose normal fiber bundle is flat. More specifically, 

\begin{corollary} Let $M^n\subset \real^{n+m}$ be the graph of an asymptotically flat map $f:\real^n\to \real^m$ endowed with the natural metric. Assume that $M$ has nonnegative scalar curvature and flat normal fiber bundle. Then the ADM mass of $M$ is nonnegative. 
\end{corollary}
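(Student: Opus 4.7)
The plan is to apply Theorem \ref{mass-graph} and observe that both terms in the integrand are nonnegative under the stated hypotheses, so the conclusion is essentially immediate. Concretely, I would proceed as follows.

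First, I would invoke the explicit formula
\begin{equation*}
m_{ADM} = \frac{1}{2(n-1)\om_{n-1}}\int_M (S+S^\perp)\frac{1}{\sqrt G}\, dM
\end{equation*}
from Theorem \ref{mass-graph}, which applies directly since $f:\real^n\to\real^m$ is assumed asymptotically flat. Note that $G=\det(g_{ij})>0$ because $g$ is a Riemannian metric (indeed, from \eqref{graph-metric}, $(g_{ij})=I+(Df)(Df)^T$ is positive definite), so the factor $1/\sqrt G$ is positive and the integral is well-defined by the integrability of $S$ (and, as we shall see, of $S^\perp$).

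Next, I would use the hypothesis that the normal bundle of $M\subset\real^{n+m}$ is flat, which by definition means $R^\perp\equiv 0$. Plugging this into the definition \eqref{normal-scalar}
\begin{equation*}
S^\perp = \lan R^\perp(\na f^\al,\na f^\be)\eta^\be,\eta^\al\ran
\end{equation*}
gives $S^\perp\equiv 0$ on $M$. Combined with the hypothesis $S\geq 0$, the integrand $(S+S^\perp)/\sqrt G$ is pointwise nonnegative.

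Finally, I would conclude that $m_{ADM}\geq 0$ as the integral of a nonnegative function against a positive constant. There is no real obstacle here: the only substantive inputs are the mass formula of Theorem \ref{mass-graph} and the tautological observation that flatness of $R^\perp$ forces the curvature-type quantity $S^\perp$ to vanish identically. In particular, no spin or dimension restriction is required, in contrast with Theorem A. \cqd
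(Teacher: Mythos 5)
Your proposal is correct and is exactly the argument the paper intends: the corollary is stated as an immediate consequence of Theorem \ref{mass-graph}, since flatness of the normal bundle gives $R^\perp\equiv 0$, hence $S^\perp\equiv 0$ by (\ref{normal-scalar}), and the mass formula then exhibits $m_{ADM}$ as the integral of the nonnegative quantity $S/\sqrt{G}$. No further comment is needed.
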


Now we will state a Penrose-type inequality for graphs manifolds with arbitrary codimension. Following \cite{huang3} closely, we can use the following Alexandrov-Fenchel inequality due to Guan and Li \cite{guanli} and an elementary lemma.

\begin{proposition}[\cite{guanli}] \label{guanli-af}
Let $\Om\subset \real^n$ be a star-shaped domain with boundary $\p\Om=\Si$. Then,
\begin{equation} \label{af-inequality}
\frac{1}{2(n-1)\om_{n-1}} \int_\Si H_\Si \, d\Si\geq \frac{1}{2} \left(\frac{|\Si|}{\om_{n-1}}\right)^{\frac{n-2}{n-1}},
 \end{equation}
where $H^\Si$ is the mean curvature of the hypersurface $\Si\subset\real^n$ in the direction of the unit vector field $\nu$ pointing inward to $\Om$. Furthermore, the equality in (\ref{af-inequality}) occurs if and only if $\Si$ is a sphere.
\end{proposition}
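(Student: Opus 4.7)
The plan is to run the inverse mean curvature flow (IMCF) $\p_t X = \nu/H$ starting from $\Si$, monitor a scale-invariant Alexandrov--Fenchel functional along the flow, and compare its initial value with its limit on a large round sphere. After a standard smooth approximation we may assume $\Si$ is $C^\infty$ and strictly star-shaped. The theorems of Gerhardt and Urbas then furnish a smooth family $\{\Si_t\}_{t\ge 0}$ of strictly star-shaped, strictly mean-convex hypersurfaces with $\Si_0=\Si$, existing for all time, such that the rescalings $|\Si_t|^{-1/(n-1)}\Si_t$ converge smoothly to the round unit sphere as $t\to\infty$. Thus IMCF deforms any star-shaped $\Si$ to a sphere (up to rescaling), reducing (\ref{af-inequality}) to a single monotonicity statement.

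The functional to monitor is
$$Q(t) \;=\; \frac{\int_{\Si_t} H\, d\mu}{|\Si_t|^{(n-2)/(n-1)}},$$
which is scale-invariant and equals $(n-1)\om_{n-1}^{1/(n-1)}$ on every round sphere. Under IMCF in $\real^n$ one has $\p_t d\mu = d\mu$ (so $|\Si_t|=e^t|\Si|$) and $\p_t H = -\De(1/H)-|A|^2/H$. Combining these and using that the Laplacian term integrates to zero yields
$$\frac{d}{dt}\int_{\Si_t} H\, d\mu \;=\; \int_{\Si_t} \frac{H^2-|A|^2}{H}\,d\mu.$$
The Newton--Maclaurin inequality $H^2 \le (n-1)|A|^2$, with equality at a point iff $\Si_t$ is umbilical there, gives $(H^2-|A|^2)/H \le \tfrac{n-2}{n-1}H$ pointwise, hence
$$\frac{d}{dt}\log\int_{\Si_t} H\,d\mu \;\le\; \frac{n-2}{n-1} \;=\; \frac{d}{dt}\log|\Si_t|^{(n-2)/(n-1)}.$$
Therefore $Q$ is nonincreasing along the flow.

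Combining the two steps, $Q(0) \ge \lim_{t\to\infty}Q(t) = (n-1)\om_{n-1}^{1/(n-1)}$, and a direct rearrangement is exactly (\ref{af-inequality}). For rigidity, equality in (\ref{af-inequality}) forces $Q$ to be constant, so Newton--Maclaurin is saturated pointwise on every $\Si_t$; each $\Si_t$ is then totally umbilical, hence a round sphere, and in particular $\Si$ itself is a sphere. The main obstacle is the flow-theoretic Step 1: long-time existence, preservation of star-shapedness and smooth asymptotic roundness under IMCF are the technical heart of the argument, and are precisely where the star-shapedness hypothesis is used. Once these are granted, the remaining monotonicity is a short elementary computation, with Newton--Maclaurin supplying both the differential inequality and its equality case.
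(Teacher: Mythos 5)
The paper does not prove this proposition at all: it is imported verbatim from Guan--Li \cite{guanli}, so there is no internal argument to compare against. What you have written is, in outline, precisely the Guan--Li proof of the $k=1$ case of their quermassintegral inequalities: Gerhardt--Urbas long-time existence and asymptotic roundness for the inverse mean curvature flow of star-shaped hypersurfaces, monotonicity of the scale-invariant ratio $Q(t)=\int_{\Si_t}H\,d\mu\,/\,|\Si_t|^{(n-2)/(n-1)}$ via $\tfrac{d}{dt}\int H\,d\mu=\int (H^2-|A|^2)/H\,d\mu$ and the pointwise inequality $|A|^2\ge H^2/(n-1)$, and the umbilicity characterization of equality. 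The evolution equations, the value $(n-1)\om_{n-1}^{1/(n-1)}$ of $Q$ on round spheres, and the rearrangement into (\ref{af-inequality}) all check out.

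One point deserves to be made explicit, because it is hidden both in your write-up and in the paper's statement. The flow $\p_t X=\nu/H$, Gerhardt's existence theorem, and the step $(H^2-|A|^2)/H\le\tfrac{n-2}{n-1}H$ all require $H>0$ on $\Si$, and star-shapedness alone does not give this; Guan--Li's theorem is stated for \emph{$k$-convex} star-shaped domains, so for this instance mean-convexity of $\Si$ is a genuine hypothesis that the proposition as quoted has silently dropped. You assert that Gerhardt--Urbas ``furnish'' a mean-convex flow, but mean-convexity of $\Si_0$ is an input to those theorems, not an output. You should either add $H^\Si>0$ to the hypotheses or note that it is being assumed. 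A second, smaller caveat: if you reduce to the smooth, strictly star-shaped case by approximation, the inequality survives the limit but the equality case does not automatically, so the rigidity statement should be argued for the original $\Si$ directly (which is fine here, since $H^\Si$ being defined already presupposes enough regularity to run the flow from $\Si$ itself).
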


\begin{lemma}[\cite{huang3}] \label{elem-ineq}Let $a_1, \cdots, a_k$ be nonnegative real numbers and $0\le \be \le 1$. Then,
\[
\sum_{i=1}^k a_i^\be \geq \left( \sum_{i=1}^k a_i \right)^\be.
\]
If $0\le \be < 1$, the equality holds if and only if at most one of $a_i$ is non-zero.
\end{lemma}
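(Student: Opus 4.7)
The plan is to induct on $k$, with all the work concentrated in the two-term case $k=2$, which in turn reduces to a single elementary observation: for every $t\in[0,1]$ and every $\beta\in[0,1]$ one has $t^\beta\ge t$. This key fact is immediate because for $t\in(0,1]$ the map $\beta\mapsto t^\beta=e^{\beta\log t}$ is nonincreasing on $[0,1]$ (as $\log t\le 0$), so $t^\beta\ge t^1=t$; the case $t=0$ is trivial (taking any reasonable convention for $0^0$ when $\beta=0$).

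For the base case $k=2$, assume without loss of generality $a_1+a_2>0$ and set $t_i=a_i/(a_1+a_2)\in[0,1]$, so that $t_1+t_2=1$. Dividing the desired inequality by $(a_1+a_2)^\beta$ reduces the claim to $t_1^\beta+t_2^\beta\ge 1$, and summing the key observation applied to $t_1$ and $t_2$ gives
\[
t_1^\beta+t_2^\beta\;\ge\;t_1+t_2\;=\;1.
\]
The inductive step is now immediate: assuming the claim for $k-1$ indices and applying the two-term case to the pair $\bigl(\sum_{i=1}^{k-1}a_i,\,a_k\bigr)$,
\[
\sum_{i=1}^{k}a_i^\beta\;\ge\;\Bigl(\sum_{i=1}^{k-1}a_i\Bigr)^\beta+a_k^\beta\;\ge\;\Bigl(\sum_{i=1}^{k}a_i\Bigr)^\beta.
\]

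For the equality clause with $0\le\beta<1$, the crucial point is that the inequality $t^\beta\ge t$ is \emph{strict} for every $t\in(0,1)$, since $\log t<0$ and $\beta<1$ force $e^{\beta\log t}>e^{\log t}$. Thus equality in the two-term case compels $t_1,t_2\in\{0,1\}$, and together with $t_1+t_2=1$ this says exactly one of $a_1,a_2$ is zero. Propagating this through the induction yields that equality in the general statement forces at most one of the $a_i$ to be nonzero, while the converse is immediate by direct substitution. No serious obstacle is anticipated here; the argument is a standard concavity/subadditivity manipulation, and the only care required is to track the strict-inequality condition cleanly through the inductive step.
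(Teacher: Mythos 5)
Your proof is correct. The paper itself offers no proof of this lemma---it is simply quoted from Huang--Wu \cite{huang3}---so there is nothing internal to compare against; your argument (normalize by $\sum a_i$, use $t^\beta\ge t$ for $t\in[0,1]$, and induct via the two-term case) is the standard subadditivity argument for concave functions vanishing at the origin, and the strictness of $t^\beta>t$ on $(0,1)$ for $\beta<1$ correctly delivers the equality clause. The only loose end is the degenerate value $\beta=0$, where the equality statement actually requires the convention $0^0=0$ (with $0^0=1$ the case $a_1>0$, $a_2=0$ gives $2>1$ rather than equality); this is an edge case the paper ignores as well, and it plays no role in the application, where $\beta=\frac{n-2}{n-1}\in(0,1)$.
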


The theorem \ref{cor-penrose-graph}, the proposition \ref{guanli-af} and the lemma \ref{elem-ineq} allow us to conclude our main result

\begin{theorem}\label{main-theorem} Under hypothesis of Theorem \ref{cor-penrose-graph}, we  assume that $M$ has non-negative scalar curvature and flat normal fiber bundle. Assume further that each connected component of $\Om$ is star-shaped. Then, \begin{equation} \label{rigidity}m_{ADM}\ge \frac{1}{2}\left(\frac{|\Si|}{\om_{n-1}}\right)^{\frac{n-2}{n-1}},\end{equation} where $|\Si|$ denotes the  total volume of $\Si$. Furthermore, the equality in (\ref{rigidity}) implies that the scalar curvature $S$ is identically zero and $\Si$ is a sphere.
\end{theorem}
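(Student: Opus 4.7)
The plan is to read the Penrose-type inequality directly off the mass formula of Theorem~\ref{cor-penrose-graph}, by first discarding the interior integral using the sign hypotheses, then estimating the boundary integral component-by-component with the Alexandrov--Fenchel inequality of Proposition~\ref{guanli-af}, and finally assembling the components with the elementary inequality in Lemma~\ref{elem-ineq}. Concretely, the flatness of the normal bundle forces $R^\perp\equiv 0$, and hence by~(\ref{normal-scalar}) we have $S^\perp\equiv 0$; together with $S\geq 0$ this makes the integrand $(S+S^\perp)/\sqrt{G}$ in Theorem~\ref{cor-penrose-graph} non-negative, so
\begin{equation*}
m_{ADM}\geq \frac{1}{2(n-1)\om_{n-1}}\int_\Si H^\Si\,d\Si.
\end{equation*}

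Next I would decompose $\Si=\bigsqcup_{i=1}^k\Si_i$ according to the connected components $\Om_1,\ldots,\Om_k$ of $\Om$, each star-shaped by hypothesis, with $\Si_i=\p\Om_i$. Applying Proposition~\ref{guanli-af} to every $\Om_i$ and summing, then invoking Lemma~\ref{elem-ineq} with $\be=(n-2)/(n-1)\in[0,1)$ applied to $a_i=|\Si_i|/\om_{n-1}$, produces
\begin{equation*}
m_{ADM}\geq \sum_{i=1}^k\frac{1}{2}\left(\frac{|\Si_i|}{\om_{n-1}}\right)^{\frac{n-2}{n-1}}\geq \frac{1}{2}\left(\frac{|\Si|}{\om_{n-1}}\right)^{\frac{n-2}{n-1}},
\end{equation*}
which is precisely~(\ref{rigidity}).

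For the rigidity statement, equality in (\ref{rigidity}) forces equality at each of the three steps. The vanishing of $\int_M(S/\sqrt{G})\,dM$ with $S\geq 0$ gives $S\equiv 0$ on $M$. The equality case of Proposition~\ref{guanli-af} forces each $\Si_i$ to be a round sphere. Finally, the equality case of Lemma~\ref{elem-ineq} (available since $\be<1$, i.e.\ $n\geq 3$) forces all but one of the $a_i$ to vanish, so necessarily $k=1$ and $\Si$ itself is a single round sphere.

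I do not expect any substantive obstacle: the argument is essentially a direct assembly of three already-proved inputs. The one point deserving care is a bookkeeping check of sign conventions, namely that the boundary integral in Theorem~\ref{cor-penrose-graph} uses the ``outward to $\Om$'' normal, and that this matches the normalization for which Proposition~\ref{guanli-af} delivers a positive lower bound. On a sphere of radius $r$ the outward-normal mean-curvature integral equals $(n-1)\om_{n-1}r^{n-2}$, matching the right-hand side of~(\ref{af-inequality}) with equality, so the two conventions do agree and the chain of inequalities is valid.
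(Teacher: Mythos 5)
Your proposal is correct and is precisely the argument the paper intends: discard the interior integral using $S\ge 0$ and $S^\perp\equiv 0$ (flat normal bundle), apply Proposition \ref{guanli-af} to each star-shaped component of $\Om$, and combine with Lemma \ref{elem-ineq}, with the rigidity read off from the equality cases of each step. The paper never writes this assembly out explicitly (it only remarks that the three ingredients ``allow us to conclude'' the theorem), so your write-up, including the sign-convention check on the round sphere, is if anything more careful than the source.
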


Some questions arise in this paper:
\begin{itemize}
\item[a)] Can we obtain an isometric immersion theorem, in the sense of the Nash theorem, so that an asymptotically flat manifold is a graph in arbitrary codimension? If yes, is it possible in such way that the normal fiber bundle is flat? 
\item[b)]  Can we obtain the rigidity in the theorem \ref{main-theorem}? We believe that extensions of the works of Schoen \cite{s} and Hounie-Leite \cite{hl} to submanifolds can bring an answer to this question.
\end{itemize}

\section{Preliminaries}
We assume the notations in the previous section. Let $U=(U_{\al\be})$ be the non-singular matrix given by $$U_{\al\be}=\lan \eta^\al,\eta^\be\ran = \de_{\al\be}+\lan Df^\al,Df^\be\ran$$ and $(U^{\al\be})=U^{-1}$ its inverse matrix.
Using that 
$\bar\na_{\p_i}\eta^\al = \eta^\al_i = (-Df^\al_i,0)$,
we obtain that $\lan\bar\na_{\p_i} \eta^\al,\p_j \ran = -f_{ij}^\al$ and $\lan \bar\na_{\p_i}\eta^\al, \eta^\be \ran= \lan Df_i^\al,Df^\be\ran$. Thus the shape operator $A^\al$ with respect to the normal vector $\eta^\al$ and the second fundamental form $B$ satisfy
\begin{eqnarray}\label{shape-oper} 
A^\al \p_i &=& -(\bar\na_{\p_i} \eta^\al)^T = f_{ik}^\al g^{kj} \p_j;\\
\nonumber 
B(\p_i,\p_j) &=& f_{ij}^\al U^{\al\be}\eta^\be.
\nonumber 
\end{eqnarray}
By Gauss Equation, the curvature tensor $R$ of $M$ satisfies
\begin{eqnarray*}
R_{ilkj}&=&\lan R(\p_i,\p_l)\p_k,\p_j\ran = \lan B(\p_i,\p_j),B(\p_l,\p_k)\ran - \lan B(\p_i,\p_k),B(\p_l,\p_j)\ran\\
&=& f_{ij}^\ga U^{\ga\al}  f^\mu_{kl}U^{\mu\be} U_{\al\be} -  f_{ik}^\ga U^{\ga\al}  f^\mu_{jl}U^{\mu\be} U_{\al\be} = (f_{ij}^\ga f_{kl}^\al - f_{ik}^\ga f_{jl}^\al)U^{\ga\al}
\end{eqnarray*}
Thus the scalar curvature $S:\real^n\to \real$ of $M$ satisfies 
\begin{equation}\label{scalar-expression1}
S = g^{ij}g^{kl}R_{ilkj}=g^{ij}g^{kl}U^{\al\be}(f_{ij}^\be f_{kl}^\al - f_{ik}^\be f_{jl}^\al).
\end{equation}

We will prove the following
\begin{proposition}\label{scalarcurvature-divX} The scalar curvature $S:\real^n\to \real$ of the graph $M$ and the function $S^\perp=\lan R^\perp(\na f^\al,\na f^\be)\eta^\be,\eta^\al\ran$ as given in (\ref{normal-scalar}) satisfy
\begin{equation*}
S+S^\perp = \na \cdot X,
\end{equation*}
where $X:\real^n\to \real^n$ is the vector field given by
\begin{equation}\label{expression-X}
X =(U^{\al\be}(f_{i}^\be f_{kk}^\al - f_{k}^\be f_{ik}^\al)+ U^{\al\ga}U^{\be\mu}\lan Df^\ga,Df_k^\mu\ran (f_i^\al f_k^\be - f_k^\al f_i^\be))\,e_i.
\end{equation}
\end{proposition}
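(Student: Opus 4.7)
The plan is to verify the identity $S + S^\perp = \nabla \cdot X$ by a direct index calculation in Euclidean coordinates. First I would split $X = X^{(1)} + X^{(2)}$, where $X^{(1)}_i = U^{\alpha\beta}(f_i^\beta f_{kk}^\alpha - f_k^\beta f_{ik}^\alpha)$ is the direct analogue of Lam's codimension-one vector field and $X^{(2)}_i = U^{\alpha\gamma}U^{\beta\mu}\langle Df^\gamma, Df_k^\mu\rangle(f_i^\alpha f_k^\beta - f_k^\alpha f_i^\beta)$ is the genuinely higher-codimension contribution. In the case $m=1$ one has $X^{(2)} \equiv 0$ and $S^\perp \equiv 0$, so $X^{(1)}$ alone should reproduce $S$; the antisymmetry $f_i^\alpha f_k^\beta - f_k^\alpha f_i^\beta$ in $X^{(2)}$ mirrors the antisymmetry of $R^\perp$ in its covector slots, which is the structural hint that $X^{(2)}$ is what carries the normal-curvature content.

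Next I would compute $\nabla \cdot X^{(1)}$. The only cancellation worth noting is among the third-derivative terms: applying $\p_i$ to the bilinear in second derivatives produces $U^{\alpha\beta}(f_i^\beta f_{ikk}^\alpha - f_k^\beta f_{iik}^\alpha)$, which vanishes after swapping the dummy pair $i \leftrightarrow k$ in the second term and using $U^{\alpha\beta} = U^{\beta\alpha}$. What remains is
$$\nabla \cdot X^{(1)} = U^{\alpha\beta}(f_{ii}^\beta f_{kk}^\alpha - f_{ik}^\beta f_{ik}^\alpha) + (\p_i U^{\alpha\beta})(f_i^\beta f_{kk}^\alpha - f_k^\beta f_{ik}^\alpha),$$
where $\p_i U^{\alpha\beta} = -U^{\alpha\gamma}U^{\beta\mu}(f_{ij}^\gamma f_j^\mu + f_j^\gamma f_{ij}^\mu)$. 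The first summand is exactly the ``Euclidean part'' of $S$ obtained by replacing $g^{ij}$ with $\delta^{ij}$ in formula (\ref{scalar-expression1}).

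In parallel I would expand $S$ using the Sherman--Morrison identity $g^{ij} = \delta_{ij} - U^{\gamma\mu} f_i^\gamma f_j^\mu$, which is immediate from the rank-$m$ structure $g_{ij} = \delta_{ij} + f_i^\alpha f_j^\alpha$. Substituting into (\ref{scalar-expression1}) gives the Euclidean part above plus explicit corrections quadratic and quartic in the gradients. For $S^\perp$ I would apply the Ricci equation in the flat ambient space,
$$\langle R^\perp(\na f^\alpha, \na f^\beta)\eta^\beta, \eta^\alpha\rangle = \langle A^{\eta^\alpha}\na f^\alpha, A^{\eta^\beta}\na f^\beta\rangle - \langle A^{\eta^\beta}\na f^\alpha, A^{\eta^\alpha}\na f^\beta\rangle,$$
and then convert each shape-operator application via (\ref{shape-oper}) and the gradient formula (\ref{grad-f}) into an explicit polynomial in $f_i^\gamma$, $f_{ij}^\gamma$, $g^{ij}$, and the $U^{\alpha\beta}$.

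The main obstacle, and the bulk of the proof, is the bookkeeping needed to match the $\p_i U^{\alpha\beta}$ correction term in $\nabla \cdot X^{(1)}$ together with the entirety of $\nabla \cdot X^{(2)}$ against the non-Euclidean corrections of $S$ (those coming from $g^{ij} - \delta^{ij}$) plus the explicit expansion of $S^\perp$. The two key structural observations that make the check manageable are that the $\p_i U^{\alpha\beta}$ correction already carries the same $U^{\alpha\gamma}U^{\beta\mu}$ factor that appears in $X^{(2)}$, and that both the antisymmetric gradient pattern in $X^{(2)}$ and the commutator structure coming from Ricci impose the same symmetries on the surviving monomials. Once every term is written as a monomial in $f_i^\alpha$ and $f_{ij}^\alpha$ with $U^{\alpha\beta}$ (and possibly $U_{\gamma\mu}$) coefficients, the identity should reduce to matching coefficients of terms with the same index pattern, using only $U^{\alpha\beta} = U^{\beta\alpha}$ and $f_{ij}^\alpha = f_{ji}^\alpha$.
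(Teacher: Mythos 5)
Your plan is correct and is essentially the paper's own proof read in the opposite direction: the paper expands $S$ via the Woodbury/Sherman--Morrison inverse $g^{ij}=\de_{ij}-U^{\ga\mu}f^\ga_i f^\mu_j$, recognizes the $\de\de$ and $\de M$ pieces as the divergence of $X$ (using $U^{\al\be}_i=-U^{\al\ga}U^{\be\mu}U_{\ga\mu,i}$ and the antisymmetry of $f^\be_i f^\al_k-f^\be_k f^\al_i$ to discard symmetric remainders), and identifies what is left with $-S^\perp$ via the Ricci equation, exactly the ingredients you list. No substantive difference in route or in the key lemmas.
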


Before we prove Proposition \ref{scalarcurvature-divX} we will need some preliminaries. For our purposes it is convenient to write $M_{ij} = \de_{ij} - g^{ij}$.  It is simple see that 
\begin{lemma}\label{prelim1} Under the notations above, the following items hold:
\begin{enumerate}[(I)]
\item\label{useful} $f_i^\al U^{\al\be} = f^\be_j g^{ji}$;
\item\label{woodbury} $M_{ij} = f_i^\al f_k^\al g^{kj} = f_i^\al f_j^\be U^{\al\be}$;
\item\label{U-inv-metric} $g(\na f^\al,\na f^\be)=\de_{\al\be}-U^{\al\be}$.
\end{enumerate}
\end{lemma}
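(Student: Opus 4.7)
The three identities are purely algebraic consequences of the definitions $g_{ij}=\de_{ij}+f_i^\al f_j^\al$ and $U_{\al\be}=\de_{\al\be}+f_k^\al f_k^\be$. Viewed matricially, if one forms the $m\times n$ Jacobian $F_{\al i}=f_i^\al$, then $g=I_n+F^TF$ and $U=I_m+FF^T$, and the Sherman--Morrison--Woodbury identity $g^{-1}=I_n-F^TU^{-1}F$ is exactly item (\ref{woodbury}). My plan is therefore to establish (\ref{woodbury}) first, derive (\ref{useful}) from the inverse relation $U^{\al\be}U_{\be\ga}=\de^\al_\ga$, and obtain (\ref{U-inv-metric}) as a short consequence of (\ref{useful}).

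For (\ref{woodbury}), the first equality is immediate: contracting $f_i^\al f_j^\al=g_{ij}-\de_{ij}$ with $g^{jk}$ gives $f_i^\al f_j^\al g^{jk}=\de_i^k-g^{ik}=M_{ik}$. For the second equality, I would either invoke Sherman--Morrison--Woodbury directly or check it at the index level by expanding $(\de_{il}-f_i^\al U^{\al\be}f_l^\be)g_{lj}$ and using $f_l^\be f_l^\ga=U_{\be\ga}-\de_{\be\ga}$ together with $U^{\al\be}U_{\be\ga}=\de^\al_\ga$; the four terms that appear collapse in pairs, leaving $\de_{ij}$. For (\ref{useful}), I would multiply both sides of the claimed identity $g^{ji}f_j^\be=f_i^\al U^{\al\be}$ by $U_{\be\mu}$. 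On the left, expanding $U_{\be\mu}=\de_{\be\mu}+f_k^\be f_k^\mu$ and using $f_j^\be f_k^\be=g_{jk}-\de_{jk}$ produces three terms, two of which cancel after relabeling, yielding $f_i^\mu$; on the right one gets $f_i^\al\de^\al_\mu=f_i^\mu$ immediately. Contracting back with $U^{\mu\ga}$ recovers (\ref{useful}).

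Finally, (\ref{U-inv-metric}) is almost automatic from (\ref{useful}). From $\na f^\al=g^{jk}f_k^\al\p_j$ and $g(\p_i,\p_j)=g_{ij}$ one computes $g(\na f^\al,\na f^\be)=g^{kl}f_k^\al f_l^\be$; rewriting $g^{kl}f_l^\be=f_k^\ga U^{\ga\be}$ via (\ref{useful}) and then substituting $f_k^\al f_k^\ga=U_{\al\ga}-\de_{\al\ga}$ reduces the expression to $(U_{\al\ga}-\de_{\al\ga})U^{\ga\be}=\de_{\al\be}-U^{\al\be}$. There is no real obstacle; the lemma is a bookkeeping exercise, and the only point worth pausing over is the recognition that (\ref{woodbury}) encodes Sherman--Morrison--Woodbury and that, once (\ref{useful}) and (\ref{woodbury}) are in place, (\ref{U-inv-metric}) is essentially a one-line consequence.
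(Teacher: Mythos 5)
Your proposal is correct and is in substance the same elementary index computation the paper uses: the paper derives (I) first from the identity $f_j^\be g_{ij}=f_i^\al U_{\al\be}$, obtains (II) from (I), and proves (III) by expanding $g^{ij}$ via (II), whereas you verify (II) (nicely identified as Sherman--Morrison--Woodbury) and (I) independently and feed (I) directly into (III); the individual steps are interchangeable one-line manipulations of $g_{ij}=\de_{ij}+f_i^\al f_j^\al$ and $U_{\al\be}=\de_{\al\be}+\lan Df^\al,Df^\be\ran$. All of your computations check out.
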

\begin{proof}
By (\ref{graph-metric}) we have that $$f^\be_j g_{ij}=f_j^\be(\de_{ij}+f_i^\al f_j^\al)=f^\be_i + f_i^\al \lan Df^\be,Df^\al\ran = f_i^\be + f_i^\al (U_{\al\be}-\de_{\al\be}) = f_i^\al U_{\al\be}.$$ Thus, Item\ref{useful} follows by multiplying both sides by  $g^{ik}U^{\be\mu}$.  Again using (\ref{graph-metric}) we have that $\de_{ij}=g_{ik}g^{kj}=(\de_{ik}+f_i^\al f_j^\al)g^{kj}=g^{ij} + f_i^\al f_j^\al g^{kj}$. This together Item \ref{useful} imply that $g^{ij}=\de_{ij} - f_i^\al f_k^\al g^{kj} = \de_{ij} - f_i^\al f_j^\be U^{\be\al}$, hence $M_{ij}=f_i^\al f_k^\al g^{kj}= f_i^\al f_j^\be U^{\be\al}$, hence Item \ref{woodbury} holds.  Now, by (\ref{grad-f}), we obtain 
\begin{eqnarray*}
g(\na f^\al,\na f^\be) &=& g^{ij}f_i^\al f_j^\be = (\de_{ij}-U^{\ga\mu}f_i^\ga f_j^\mu)f_i^\al f_j^\be\\ &=& \lan Df^\al,Df^\be\ran - U^{\ga\mu}\lan Df^\ga,Df^\al\ran\lan Df^\mu,Df^\be\ran\\ &=& \lan Df^\al,Df^\ga\ran(\de_{\ga\be} - U^{\ga\mu}(U_{\mu\be}-\de_{\mu\be}))=\lan Df^\al,Df^\ga\ran U^{\ga\be}\\&=&(U_{\al\ga}-\de_{\al\ga})U^{\ga\be}=\de_{\al\be}-U^{\al\be},
\end{eqnarray*}
We obtain Item \ref{U-inv-metric}. Lemma \ref{prelim1} is proved.
\end{proof} 
The following result is useful to prove Proposition \ref{scalarcurvature-divX}.
\begin{lemma}\label{prelim2} The following items are true:
\begin{enumerate}[(i)]
\item\label{dijdkl} $\de_{ij}\de_{kl} U^{\al\be}(f_{ij}^\be f_{kl}^\al - f_{ik}^\be f_{jl}^\al)=U^{\al\be}(f_i^\be f_{kk}^\al - f_k^\be f_{ik}^\al)_i$;
\item\label{dklmij} $\de_{ij}M_{kl}U^{\al\be}(f_{ij}^\be f_{kl}^\al - f_{ik}^\be f_{jl}^\al)=\de_{kl}M_{ij}U^{\al\be}(f_{ij}^\be f_{kl}^\al - f_{ik}^\be f_{jl}^\al);$
\item\label{2dijmkl} $2\de_{ij}M_{kl}U^{\al\be}(f_{ij}^\be f_{kl}^\al - f_{ik}^\be f_{jl}^\al)=-U^{\al\be}_i (f_i^\be f_{kk}^\al -  f_k^\be f_{ik}^\al) - U^{\al\ga}U^{\be\mu}\lan Df^\ga,Df^\mu_i\ran F^{\be\al}_{ik,k}$;
\item\label{mijmkl} $M_{ji}M_{kl}U^{\al\be}(f_{ij}^\be f_{kl}^\al - f_{ik}^\be f_{lj}^\al) = U^{\al\nu} U^{\be\mu}  U^{\te\ga}\lan Df^\mu, Df_i^\ga \ran \lan Df^\nu, Df_k^\te \ran F^{\be\al}_{ik}$;
\end{enumerate}
where  
$F_{ik}^{\be\al}=f_i^\be f_k^\al - f_k^\be f_i^\al$ and $F^{\be\al}_{ik,l}=\frac{\p}{\p x_l}F^{\be\al}_{ik}$.
\end{lemma}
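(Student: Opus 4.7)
The four items collectively verify the splitting of the scalar-curvature formula (\ref{scalar-expression1}) obtained by expanding $g^{ij}g^{kl}=(\delta_{ij}-M_{ij})(\delta_{kl}-M_{kl})$, so the plan is to check each as a purely algebraic or differential identity, using Lemma \ref{prelim1}, the symmetry $f^\al_{ij}=f^\al_{ji}$ of second partials, the symmetry $U^{\al\be}=U^{\be\al}$, and free relabeling of summation indices.

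For (i), I would contract the Kronecker deltas on the left to obtain $f_{ii}^\be f_{kk}^\al - f_{ik}^\be f_{ik}^\al$, then expand the divergence on the right by the product rule; the two surplus terms $f_i^\be f_{kki}^\al - f_k^\be f_{iki}^\al$ cancel after swapping the dummies $i\leftrightarrow k$ in the second and invoking equality of mixed partials. Identity (ii) is a pure symmetry statement: renaming $(i,j,k,l,\alpha,\beta) \to (k,l,i,j,\beta,\alpha)$ on one side, together with $U^{\al\be}=U^{\be\al}$ and the symmetry of $f_{ij}^\al$, reproduces the other side.

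For (iii), I would first use (ii) to move the $\delta$ to the $(k,l)$ slot, then substitute $M_{ij}=f_i^\ga f_j^\mu U^{\ga\mu}$ from Lemma \ref{prelim1}(\ref{woodbury}). On the right, the key inputs are the matrix-inverse derivative $\p_i U^{\al\be}=-U^{\al\ga}U^{\be\mu}\p_i U_{\ga\mu}$ together with $\p_i U_{\ga\mu}=\lan Df_i^\ga,Df^\mu\ran+\lan Df^\ga,Df_i^\mu\ran$, and the Leibniz expansion of $F^{\be\al}_{ik,k}$; the factor of $2$ on the left matches the symmetric sum of the two pieces of $\p_i U_{\ga\mu}$ after appropriate dummy-index relabeling. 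Finally, (iv) is obtained by substituting both $M_{ji}$ and $M_{kl}$ via Lemma \ref{prelim1}(\ref{woodbury}), collecting the $j$- and $l$-summations into gradient inner products $\lan Df^\bullet,Df_\bullet^\bullet\ran$, and reorganising the resulting six-fold sum so that the antisymmetric combination $F^{\be\al}_{ik}=f_i^\be f_k^\al-f_k^\be f_i^\al$ factors out externally.

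The main obstacle will be bookkeeping: items (iii) and (iv) involve many simultaneously summed Euclidean and normal indices, and one must keep careful track of which $U$-indices pair with which derivative factor. No identity beyond those listed is required, and each step is routine in isolation.
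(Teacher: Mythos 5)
Your plan is correct and follows essentially the same route as the paper's proof: item (i) via the product rule and commuting third partials, item (ii) by pure index/symmetry relabeling, item (iii) by substituting $M_{kl}=f_k^\ga f_l^\mu U^{\ga\mu}$ and exploiting $U^{\al\be}_i=-U^{\al\ga}U^{\be\mu}U_{\ga\mu,i}$ with the two pieces of $U_{\ga\mu,i}$ producing the factor $2$ and the $F^{\be\al}_{ik,k}$ remainder, and item (iv) by double substitution and factoring out the antisymmetric $F^{\be\al}_{ik}$. No gap; only the bookkeeping you already anticipate remains.
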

\begin{proof} Item \ref{dijdkl} follows from the fact that 
$f_{ii}^\be f_{kk}^\al - f_{ik}^\be f_{ik}^\al = (f_{i}^\be f_{kk}^\al - f_{k}^\be f_{ik}^\al)_i$.

Since $U^{\al\be}=U^{\be\al}$ we have that
\begin{eqnarray*}
\de_{kl}M_{ij}U^{\al\be}(f_{ij}^\be f_{kl}^\al - f_{ik}^\be f_{jl}^\al) &=& \de_{ij}M_{kl}U^{\al\be}(f_{kl}^\be f_{ij}^\al - f_{ki}^\be f_{lj}^\al)\\
&=& \de_{ij}M_{kl}(U^{\be\al}f_{kl}^\al f_{ij}^\be   - U^{\al\be}f_{jl}^\al f_{ik}^\be)\\
&=& \de_{ij}M_{kl}U^{\al\be}(f_{ij}^\be f_{kl}^\al  - f_{ik}^\be f_{jl}^\al),
\end{eqnarray*}
which proves Item \ref{dklmij}.

By using Item \ref{woodbury} of Lemma \ref{prelim1} we obtain 
\begin{eqnarray}
\de_{ij}M_{kl}U^{\al\be}(f_{ij}^\be f_{kl}^\al - f_{ik}^\be f_{jl}^\al) 
&=& f_k^\ga f_l^\mu U^{\ga\mu}U^{\al\be}(f_{ii}^\be f_{kl}^\al - f_{ik}^\be f_{il}^\al)
\nonumber\\
%&=& U^{\al\be} U^{\ga\mu}(f_{ii}^\be f_k^\ga f_{kl}^\al f_l^\mu - f_{ik}^\be f_k^\ga f_{il}^\al f_l^\mu)\nonumber\\
&=& U^{\al\be} U^{\ga\mu}(f_{ii}^\be f_k^\ga f_{kl}^\al f_l^\mu - f_{ki}^\be f_i^\ga f_{kl}^\al f_l^\mu)\nonumber\\
%&=& U^{\al\be} U^{\ga\mu}f_{kl}^\al f_l^\mu (f_{ii}^\be f_k^\ga  - f_{ik}^\be f_i^\ga) \nonumber\\
&=& U^{\ga\be} U^{\al\mu}f_{kl}^\ga f_l^\mu (f_{ii}^\be f_k^\al  - f_{ik}^\be f_i^\al)
\nonumber\\
&=& U^{\be\ga} U^{\al\mu}f_{ij}^\ga f_j^\mu (f_{kk}^\be f_i^\al  - f_{ki}^\be f_k^\al)
\nonumber\\
%&=& U^{\al\ga} U^{\be\mu}f_{ij}^\ga f_j^\mu (f_i^\be f_{kk}^\al   -  f_k^\be f_{ik}^\al ) \nonumber\\
&=& U^{\al\ga} U^{\be\mu}\lan Df_i^\ga, Df^\mu \ran   (f_i^\be f_{kk}^\al   -  f_k^\be f_{ik}^\al). \label{DijMklUab}
\end{eqnarray}
Since $U^{\al\ga}U_{\ga\mu}=\de_{\al\mu}$ it follows that  
$U^{\al\ga}_i= - U^{\al\ga}U^{\be\mu}U_{\ga\mu, i}$, hence %$U^{\al\be}_i = -U^{\al\ga}U^{\be\mu}\lan Df^\ga,Df^\mu\ran_i$, %= U^{\al\ga}U^{\be\mu}(\lan Df^\ga_i,Df^\mu\ran + \lan Df^\ga,Df^\mu_i\ran)$ 
\begin{equation}\label{Ualbei}
U^{\al\ga}U^{\be\mu}\lan Df^\ga,Df^\mu_i\ran= -\,U^{\al\be}_i - U^{\al\ga}U^{\be\mu}\lan Df^\ga_i,Df^\mu\ran.
\end{equation}
It is easy to see that $F^{\be\al}_{ik,k}=(f_i^\be f_k^\al - f_k^\be f_i^\al)_k = (f_i^\be f_{kk}^\al - f_k^\be f_{ik}^\al) - (f_i^\al f_{kk}^\be - f_k^\al f_{ik}^\be)$. Thus we obtain
\begin{eqnarray}\label{interm}
U^{\al\ga}U^{\be\mu}\lan Df^\ga,Df^\mu_i\ran (f_i^\be f_{kk}^\al   -  f_k^\be f_{ik}^\al ) &=&  U^{\al\ga}U^{\be\mu}\lan Df^\ga,Df^\mu_i\ran F_{ik,k}^{\be\al}
\nonumber\\
&& +\,  U^{\al\ga}U^{\be\mu}\lan Df^\ga,Df^\mu_i\ran (f_i^\al f_{kk}^\be - f_k^\al f_{ik}^\be)
\nonumber\\
&=&  U^{\al\ga}U^{\be\mu}\lan Df^\ga,Df^\mu_i\ran F_{ik,k}^{\be\al}\\
&& +\,U^{\be\mu}U^{\al\ga}\lan Df^\mu,Df^\ga_i\ran (f_i^\be f_{kk}^\al - f_k^\be f_{ik}^\al).\nonumber
\end{eqnarray}
By using (\ref{Ualbei}) and (\ref{interm}) we obtain 
\begin{eqnarray}\label{exp2}
U^{\al\ga} U^{\be\mu}\lan Df_i^\ga, Df^\mu \ran (f_i^\be f_{kk}^\al   -  f_k^\be f_{ik}^\al) &=& -\, U^{\al\be}_i (f_i^\be f_{kk}^\al   -  f_k^\be f_{ik}^\al)\nonumber\\
&& - \,U^{\al\ga}U^{\be\mu}\lan Df^\ga,Df^\mu_i\ran (f_i^\be f_{kk}^\al   -  f_k^\be f_{ik}^\al)\nonumber\\
&=& -\, U^{\al\be}_i (f_i^\be f_{kk}^\al   -  f_k^\be f_{ik}^\al)\\ && 
- \, U^{\al\ga}U^{\be\mu}\lan Df^\ga,Df^\mu_i\ran F_{ik,k}^{\be\al} \nonumber\\
&& -\,U^{\be\mu}U^{\al\ga}\lan Df^\mu,Df^\ga_i\ran (f_i^\be f_{kk}^\al - f_k^\be f_{ik}^\al).\nonumber
\end{eqnarray}
Using (\ref{DijMklUab}) and (\ref{exp2}) we obtain\begin{eqnarray*}
2 \de_{ij}M_{kl}U^{\al\be}(f_{ij}^\be f_{kl}^\al - f_{ik}^\be f_{jl}^\al) &=& -\, U^{\al\be}_i (f_i^\be f_{kk}^\al   -  f_k^\be f_{ik}^\al)- U^{\al\ga}U^{\be\mu}\lan Df^\ga,Df^\mu_i\ran F^{\be\al}_{ik,k},
\end{eqnarray*}
which proves Item \ref{2dijmkl}.

Finally, again using Item \ref{woodbury} of Lemma \ref{prelim1}, we have 
\begin{eqnarray}
M_{ij}M_{kl}U^{\al\be}(f_{ij}^\be f_{kl}^\al - f_{ik}^\be f_{lj}^\al) &=& f_i^\ga f_j^\mu U^{\ga\mu} f_k^\te f_l^\nu U^{\te\nu} U^{\al\be}(f_{ij}^\be f_{kl}^\al - f_{ik}^\be f_{lj}^\al)\nonumber \\
%&=& U^{\ga\mu} U^{\te\nu} U^{\al\be} f_i^\ga f_j^\mu f_k^\te f_l^\nu (f_{ij}^\be f_{kl}^\al - f_{ik}^\be f_{lj}^\al)\nonumber\\
&=& U^{\ga\mu} U^{\te\nu} U^{\al\be}\lan Df^\mu, Df_i^\be \ran \lan Df^\nu, Df_k^\al \ran f_i^\ga f_k^\te \nonumber \\&& -\, U^{\ga\mu} U^{\te\nu} U^{\al\be}\lan Df^\mu , Df_l^\al\ran \lan Df^\te,Df_i^\be \ran f_i^\ga f_l^\nu
\nonumber\\
&=& U^{\ga\mu} U^{\te\nu} U^{\al\be}\lan Df^\mu, Df_i^\be \ran \lan Df^\nu, Df_k^\al \ran f_i^\ga f_k^\te \nonumber\\&& -\, U^{\ga\mu} U^{\nu\te} U^{\be\al}\lan Df^\mu , Df_k^\be\ran \lan Df^\nu,Df_i^\al \ran f_i^\ga f_k^\te
\nonumber\\
%&=& U^{\ga\mu} U^{\te\nu} U^{\al\be}\lan Df^\mu, Df_i^\be \ran \lan Df^\nu, Df_k^\al \ran f_i^\ga f_k^\te \nonumber\\&& -\, U^{\ga\mu} U^{\te\nu} U^{\al\be}\lan Df^\mu , Df_i^\be\ran \lan Df^\nu,Df_k^\al \ran f_k^\ga f_i^\te
%\nonumber\\
&=& U^{\ga\mu} U^{\te\nu} U^{\al\be}\lan Df^\mu, Df_i^\be \ran \lan Df^\nu, Df_k^\al \ran F^{\ga\te}_{ik} 
\nonumber\\
&=& U^{\be\mu} U^{\al\nu} U^{\te\ga}\lan Df^\mu, Df_i^\ga \ran \lan Df^\nu, Df_k^\te \ran F^{\be\al}_{ik}.\nonumber
%&=& N^{\te\ga} g(\na f^\be,\na f_i^\ga) g(\na f^\al, \na f_k^\te)  F^{\be\al}_{ik}
\end{eqnarray}
We conclude Item \ref{mijmkl}. Lemma \ref{prelim2} is proved.
\end{proof}

Now we will prove Proposition \ref{scalarcurvature-divX}. By using (\ref{scalar-expression1}), we have that 
$S= (\de_{ij}-M_{ij})(\de_{kl}-M_{kl})U^{\al\be}(f_{ij}^\be f_{kl}^\al - f_{ik}^\be f_{jl}^\al)$. Thus, by Lemma \ref{prelim2}, we obtain
\begin{eqnarray}\label{scalarexpression-4}
S&=& U^{\al\be}(f_i^\be f_{kk}^\al - f_k^\be f_{ik}^\al)_i
\nonumber \\&&
+U^{\al\be}_i (f_i^\be f_{kk}^\al -  f_k^\be f_{ik}^\al) + U^{\al\ga}U^{\be\mu}\lan Df^\ga,Df^\mu_i\ran F^{\be\al}_{ik,k}
\nonumber \\
&&+U^{\al\nu} U^{\be\mu}  U^{\te\ga}\lan Df^\mu, Df_i^\ga \ran \lan Df^\nu, Df_k^\te \ran F^{\be\al}_{ik}
\nonumber \\
&=& (U^{\al\be}(f_{i}^\be f_{kk}^\al - f_{k}^\be f_{ik}^\al)+ U^{\al\ga}U^{\be\mu}\lan Df^\ga,Df_k^\mu\ran F_{ki}^{\be\al})_i + V_{ik}^{\al\be}F_{ik}^{\be\al},
\end{eqnarray}
where $V_{ik}^{\al\be}$ is  given by
\begin{equation}\label{Vikalbe}
V_{ik}^{\al\be}=U^{\al\nu} U^{\be\mu}  U^{\te\ga}\lan Df^\mu, Df_i^\ga \ran \lan Df^\nu, Df_k^\te \ran -(U^{\al\ga} U^{\be\mu}\lan Df^\ga,Df_i^\mu\ran)_k.
\end{equation}
It holds that
\begin{lemma} $V_{ik}^{\al\be}F_{ik}^{\be\al}=\lan R^\perp(\na f^\ga,\na f^\mu)\eta^\mu,\eta^\ga\ran$.
\end{lemma}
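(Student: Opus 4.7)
The strategy is to rewrite both sides of the identity in terms of the Christoffel coefficients $\Ga_{i\al}^\be$ of the normal connection on $M$, defined by $\na^\perp_{\p_i}\eta^\al=\Ga_{i\al}^\be\eta^\be$, and to match them using the antisymmetries already present in $F_{ik}^{\be\al}$. Since $\bar\na_{\p_i}\eta^\al=(-Df^\al_i,0)$, taking the inner product with $\eta^\ga$ gives $\lan Df^\al_i,Df^\ga\ran=\Ga_{i\al}^\be U_{\be\ga}$, so that $\Ga_{i\al}^\be=U^{\be\ga}\lan Df^\al_i,Df^\ga\ran$. Substituting this into (\ref{Vikalbe}) collapses the two constituents of $V_{ik}^{\al\be}$ to the compact forms $W_{ik}^{\al\be}=U^{\te\ga}\Ga_{i\ga}^\be\Ga_{k\te}^\al$ and $Z_{ik}^{\al\be}=\p_k(U^{\be\mu}\Ga_{i\mu}^\al)$, with $V=W-Z$.

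Because $[\p_i,\p_k]=0$, the normal curvature of $M\subset\real^{n+m}$ is $R^\perp(\p_i,\p_k)\eta^\mu=R^\perp_{ik\mu}{}^\be\eta^\be$ with
\begin{equation*}
R^\perp_{ik\mu}{}^\be = \p_i\Ga_{k\mu}^\be-\p_k\Ga_{i\mu}^\be+\Ga_{i\nu}^\be\Ga_{k\mu}^\nu-\Ga_{k\nu}^\be\Ga_{i\mu}^\nu.
\end{equation*}
Writing $\na f^\ga=g^{ij}f^\ga_j\p_i$ and using Item~(\ref{useful}) of Lemma~\ref{prelim1} in the form $g^{ij}f^\ga_j U_{\ga\al}=f^\al_i$ (summed over $j,\ga$), the target expression reduces, after summing over $\ga$, to
\begin{equation*}
\lan R^\perp(\na f^\ga,\na f^\mu)\eta^\mu,\eta^\ga\ran = f^\be_i f^\al_k U^{\al\mu}R^\perp_{ik\mu}{}^\be,
\end{equation*}
with all repeated indices summed.

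What remains is the algebraic verification that $V_{ik}^{\al\be}F_{ik}^{\be\al}$ agrees with the right-hand side just displayed. Applying the product rule to $Z$ splits $Z_{ik}^{\al\be}F_{ik}^{\be\al}$ into a genuine $\p\Ga-\p\Ga$ contribution plus a term containing $\p_k U^{\be\mu}$; rewriting the latter via $\p_k U^{\be\mu}=-U^{\be\la}U^{\mu\nu}\p_k U_{\la\nu}$ together with the compatibility identity $\p_k U_{\la\nu}=\lan Df^\la_k,Df^\nu\ran+\lan Df^\la,Df^\nu_k\ran$ produces $\Ga\Ga$-quadratic corrections which combine with $W_{ik}^{\al\be}F_{ik}^{\be\al}$ to build the full $\Ga\Ga-\Ga\Ga$ piece. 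Throughout, factors $\sum_\be f^\be_i U^{\be\cdot}$ are collapsed via Item~(\ref{useful}) of Lemma~\ref{prelim1}, and the antisymmetry of $F_{ik}^{\be\al}$ in both $(i,k)$ and $(\al,\be)$ is used to fold pairs of resulting terms into the antisymmetrized curvature structure. The main obstacle is keeping this final index-tracking organized; conceptually, however, the identity is no more than the Ricci equation for $M\subset\real^{n+m}$, rearranged into the particular algebraic form forced on us by the Gauss-equation derivation of Proposition~\ref{scalarcurvature-divX}.
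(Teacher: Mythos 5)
Your setup is correct as far as it goes: $\Ga_{i\al}^\be=U^{\be\ga}\lan Df_i^\al,Df^\ga\ran$ does follow from $\lan\bar\na_{\p_i}\eta^\al,\eta^\ga\ran=\lan Df_i^\al,Df^\ga\ran$, the identification $V=W-Z$ is a faithful rewriting of (\ref{Vikalbe}), and your coordinate formula for $R^\perp_{ik\mu}{}^\be$ and the reduction of the right-hand side via $\na f^\ga=U^{\ga\al}f_i^\al\p_i$ are both standard and accurate. But the proof stops exactly where the content of the lemma begins. The whole assertion is that $(W_{ik}^{\al\be}-Z_{ik}^{\al\be})F_{ik}^{\be\al}$ equals the contracted curvature $f_i^\be U^{\al\mu}f_k^\al R^\perp_{ik\mu}{}^\be$, and your final paragraph only describes the shape of that computation (``produces $\Ga\Ga$-quadratic corrections which combine with $W$\dots'', ``pairs of resulting terms fold into the antisymmetrized curvature structure'') without exhibiting which terms cancel, which survive, and with what signs. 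In particular you never isolate the combination that is symmetric in $(i,k)$ and hence annihilated by $F_{ik}^{\be\al}$; this is the one genuinely structural step, and the paper performs it explicitly by writing $V_{ik}^{\al\be}=-C_{ik}^{\al\be}+(\text{commutator terms})$ with $C_{ik}^{\al\be}=C_{ki}^{\al\be}$, so that $C_{ik}^{\al\be}F_{ik}^{\be\al}=0$, and then recognizing the remainder as $-U^{\al\ga}U^{\be\mu}\,g(A^\mu\p_i,A^\ga\p_k)F_{ik}^{\be\al}$, which the Ricci equation converts into the normal curvature. As written, your argument is a plan for a proof, not a proof.

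The sign is the other gap, and it is not cosmetic here. The paper's own displayed computation (\ref{Sperp-coord}) actually yields $V_{ik}^{\al\be}F_{ik}^{\be\al}=-\lan R^\perp(\na f^\ga,\na f^\mu)\eta^\mu,\eta^\ga\ran$, i.e.\ the lemma as printed differs by a sign from its own proof, and the minus sign is the one needed for Proposition \ref{scalarcurvature-divX}, since (\ref{scalarexpression-4}) gives $S=\na\cdot X+V_{ik}^{\al\be}F_{ik}^{\be\al}$ and one wants $S+S^\perp=\na\cdot X$. A complete proof along your lines must output a definite sign and reconcile it with the convention chosen for $R^\perp$; a verification left at the level of ``the terms combine'' cannot detect exactly the kind of discrepancy that is present in the statement itself. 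If carried through, your route --- computing $R^\perp$ directly from the normal connection coefficients rather than invoking the Ricci equation together with the explicit formula $g(A^\mu\p_i,A^\ga\p_k)=\lan Df_i^\mu,Df_k^\ga\ran-U^{\te\nu}\lan Df_i^\mu,Df^\nu\ran\lan Df_k^\ga,Df^\te\ran$ --- is a legitimate alternative, but it trades the paper's one clean cancellation for a heavier index bookkeeping that you have not actually done.
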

In fact, by using (\ref{shape-oper}) it follows that 
\begin{eqnarray}\label{shape-comut}
g(A^\mu \p_i,A^\ga\p_k) &=& f_{il}^\mu g^{lr} f_{kr}^\ga = f_{il}^\mu f_{kr}^\ga (\de_{lr}-U^{\te\nu}f_l^\nu f_r^\te)\nonumber\\
&=& \lan Df_i^\mu, Df_k^\ga\ran - U^{\te\nu}\lan Df_i^\mu,Df^\nu\ran \lan Df_k^\ga,Df^\te\ran. 
\end{eqnarray}
Now, by (\ref{Vikalbe}) and using that $U^{\al\be}_r=-U^{\al\ga}U^{\be\mu}U_{\ga\mu,r}$, we obtain 
\begin{eqnarray*}
V_{ik}^{\al\be} 
&=& 
U^{\al\nu} U^{\be\mu}  U^{\te\ga}(U_{\mu\ga,i}-\lan Df_i^\mu, Df^\ga \ran)\lan Df^\nu, Df_k^\te \ran  - U^{\al\ga}_k U^{\be\mu}\lan Df^\ga,Df_i^\mu\ran 
\\
&& - U^{\al\ga} U^{\be\mu}_k \lan Df^\ga,Df_i^\mu\ran 
- U^{\al\ga} U^{\be\mu}\lan Df_k^\ga,Df_i^\mu\ran 
- U^{\al\ga} U^{\be\mu}\lan Df^\ga,Df_{ik}^\mu\ran
\\
&=& - U^{\al\nu}U^{\be\te}_i\lan Df^\nu, Df_k^\te \ran - U^{\al\nu} U^{\be\mu}  U^{\te\ga}\lan Df_i^\mu, Df^\ga \ran (U_{\nu\te,k}-\lan Df_k^\nu, Df^\te \ran) 
\\
&& - U^{\al\ga}_k U^{\be\mu}\lan Df^\ga,Df_i^\mu\ran 
- U^{\al\ga} U^{\be\mu}_k \lan Df^\ga,Df_i^\mu\ran 
\\
&& - U^{\al\ga} U^{\be\mu}\lan Df_k^\ga,Df_i^\mu\ran 
- U^{\al\ga} U^{\be\mu}\lan Df^\ga,Df_{ik}^\mu\ran
\\
&=& - C_{ik}^{\al\be} + U^{\al\ga}_k U^{\be\mu}\lan Df_i^\mu, Df^\ga \ran + U^{\al\nu} U^{\be\mu}  U^{\te\ga}\lan Df_i^\mu, Df^\ga\ran\lan Df_k^\nu, Df^\te \ran
\\
&& - U^{\al\ga}_k U^{\be\mu}\lan Df^\ga,Df_i^\mu\ran  
 - U^{\al\ga} U^{\be\mu}\lan Df_k^\ga,Df_i^\mu\ran 
\\
&=& -C_{ik}^{\al\be} + U^{\al\nu} U^{\be\mu}  U^{\te\ga}\lan Df_i^\mu, Df^\ga\ran\lan Df_k^\nu, Df^\te \ran
- U^{\al\ga} U^{\be\mu}\lan Df_k^\ga,Df_i^\mu\ran, 
\end{eqnarray*}
where $C_{ik}^{\al\be}$ is given by
\begin{equation*}
C_{ik}^{\al\be}= U^{\al\nu}U^{\be\te}_i\lan Df^\nu, Df_k^\te \ran + U^{\al\ga} U^{\be\mu}_k \lan Df^\ga,Df_i^\mu\ran + U^{\al\ga} U^{\be\mu}\lan Df^\ga,Df_{ik}^\mu\ran.
\end{equation*}
Note that $C_{ik}^{\al\be}=C_{ki}^{\al\be}$. Since $F_{ik}^{\be\al}=-F_{ki}^{\be\al}$ we obtain that  $C_{ik}^{\al\be}F_{ik}^{\be\al}=0$. Thus, by using that $\na f^\al=U^{\al\ga}f_i^\ga \p_i$, it follows from (\ref{shape-comut}) and Ricci's equation that
\begin{eqnarray}\label{Sperp-coord}
V_{ik}^{\al\be}F_{ik}^{\be\al} &=& - (f_i^\be f_k^\al - f_k^\be f_i^\al)U^{\al\ga}U^{\be\mu}g(A^\mu\p_i, A^\ga\p_k)
\\
&=& - (g(A^\mu(\na f^\mu), A^\ga(\na f^\ga)) - g(A^\mu (\na f^\ga), A^\ga(\na f^\mu)))\nonumber
\\
&=& - \lan R^\perp(\na f^\ga, \na f^\mu)\eta^\mu,\eta^\ga\ran, \nonumber
\end{eqnarray}
which together with (\ref{scalarexpression-4}) concludes the proof of Proposition \ref{scalarcurvature-divX}.

\section{Proof of Theorem \ref{mass-graph}.}
Since $f=(f^1,\ldots,f^m):\real^n\to \real^m$ is an asymptotically flat map it holds that  $f_i^\al = O(|x|^{-p/2})$ and $f_{ik}^\al = O(|x|^{-p/2-1})$, for all $i,k=1,\ldots,n$ and $\al=1,\ldots,m$. In particular, $U^{\al\ga}$ tends to $\de_{\al\ga}$ when $|x|\to \infty$. Furthermore, using \ref{woodbury} and \ref{U-inv-metric}, we have that $U^{\al\be}-\de_{\al\be}=-g(\na f^\al,f^\be)=U^{\al\ga}\lan Df^\ga,Df^\be\ran=O(|x|^{-p})$. This implies
\begin{equation*}
(U^{\al\be}-\de_{\al\be})(f_i^\be f_{kk}^\al - f_k^\be f_{ik}^\al) = O(|x|^{-2p-1})
\end{equation*}
and 
\begin{equation*}
U^{\al\ga}U^{\be\mu}\lan Df^\ga,Df_k^\mu\ran f_i^\al f_k^\be = O(|x|^{-2p-1}).
\end{equation*}
Since $p>(n-2)/2$ we have that $2p+1>n-1=\dim S_r$. Thus we obtain  
\begin{equation}\label{decaimass}
\lim_{r\to \infty}\int_{S_r} U^{\al\be}(f_i^\be f_{kk}^\al - f_k^\be f_{ik}^\al)\frac{x^i}{|x|}=\lim_{r\to \infty}\int_{S_r} (f_i^\al f_{kk}^\al - f_k^\al f_{ik}^\al)\frac{x^i}{|x|}
\end{equation}  
and
\begin{equation} \label{decai}
\lim_{r\to \infty}\int_{S_r} U^{\al\ga}U^{\be\mu}\lan Df^\ga,Df_k^\mu\ran (f_i^\al f_k^\be - f_k^\al f_i^\be)=0.
\end{equation}
Furthermore the function $S^\perp=\lan R^\perp(\na f^\al,\na f^\be)\eta^\be,\eta^\al\ran \in O(|x|^{-2p-2})$ since, by (\ref{Sperp-coord}) and (\ref{shape-comut}), it is expressed by 
\begin{equation}\label{S_perp}
S^\perp = U^{\al\ga} U^{\be\mu}(\lan Df_k^\ga,Df_i^\mu\ran + U^{\te\nu} \lan Df_i^\mu, Df^\nu\ran \lan Df_k^\ga, Df^\te \ran)(f_i^\al f_k^\be - f_i^\be f_k^\al).
\end{equation}
Since $2p+2>n$ it follows that $S^\perp:\real^n\to \real$ is integrable. We recall also that, by hypothesis, the scalar curvature function $S:\real^n\to \real$ is integrable.

Since $g_{kik} - g_{kki}= f_i^\al f_{kk}^\al - f_k^\al f_{ik}^\al$, it follows from Proposition \ref{scalarcurvature-divX} together with (\ref{decaimass}), (\ref{decai}) and the divergence theorem, that
\begin{eqnarray}\label{final}
\int_{R^n}S +S^\perp &=& \lim_{r\to \infty} \int_{S_r} \lan X,\frac{x}{|x|}\ran = \lim_{r\to \infty} \int_{S_r} (f_i^\al f_{kk}^\al - f_k^\al f_{ik}^\al)\frac{x^i}{|x|}\nonumber\\&=& 2(n-1)\om_{n-1} m_{ADM}\nonumber.
\end{eqnarray}
Theorem \ref{mass-graph} follows from the fact that  $\int_{R^n} S + S^\perp = \int_M (S + S^\perp)\frac{1}{\sqrt G} dM$.

\section{proof of Theorem \ref{penrose-graph}}

Let $\nu:\p\Om\to \real^n$ be the unit vector field orthogonal to $\p\Om$ pointing outward to $\Om$. Let $H^\Si=-\dv_{\real^n}\nu$ be the mean curvature of $\Si=\p\Om$ seen as a hypersurface of the Euclidean space $\real^n$.
%By a translating of $f=(f^1,\ldots,f^m)$ we may assume without loss of generality that $f\equiv 0$ in $\p\Om$. In particular, $\Si=\p \Om = \p M$. 

Since each connected component of $\Si$ is a level set of $f^\al$, for all $\al$, it follows that the gradient vector field $Df^\al$ is normal to $\Si$, hence 
\begin{equation}\label{sign-LD}
Df^\al = \lan Df^\al, \nu\ran\,\nu \ \ \mbox{ in } \Si.
\end{equation}
In particular, $Df^\al$ and $Df^\be$ are linearly dependent which implies that
\begin{eqnarray}\label{LD}
f_i^\al f_k^\be-f_k^\al f_i^\be= \lan (Df^\be\wedge Df^\al)e_i,e_k \ran = 0 \ \ \mbox{ in } \Si,
\end{eqnarray} 
for all $\al,\be=1,\ldots,n$. Here, $"\wedge" :\real^n\times \real^n\to (\real^n)^*$ is the skew-symmetric tensor given by
$(u\wedge v)w=\lan v,w\ran u - \lan u,w\ran v$, for all $u,v,w\in \real^n$.

Using (\ref{expression-X}), (\ref{sign-LD}) and (\ref{LD})  we obtain
%\begin{eqnarray}\label{inner}\lan X, \nu \ran 
%&=& U^{\al\be}_\ep(\frac{\p f_\ep^\be}{\p x^i}\frac{\p^2 f_\ep^\al}{\p x^k\p x^k} - \frac{\p f_\ep^\be}{\p x^k}\frac{\p^2 f_\ep^\al}{\p x^i\p x^k}) \nu^i \\&=& U^{\al\be}(\De f^\al \lan Df^\be,\nu\ran - \hs_{f^\al}(Df^\be,\nu))\\&=& \si U^{\al\be}|Df^\be|(\De f^\al  - \hs_{f^\al}(\nu,\nu)) \ \ \mbox{  in  } \Si\nonumber\end{eqnarray}
\begin{eqnarray}\label{inner}
\lan X, \nu \ran 
&=& U^{\al\be}(f_i^\be f_{kk}^\al - f_k^\be f_{ik}^\al) \nu^i \\
&=& U^{\al\be}(\De f^\al \lan Df^\be,\nu\ran - \hs_{f^\al}(Df^\be,\nu))\nonumber
%\\&=& \si U^{\al\be}|Df^\be|(\De f^\al  - \hs_{f^\al}(\nu,\nu)) \ \ \mbox{  in  } \Si\nonumber
\end{eqnarray}
%Since $f^\al$ is constant along $\Si$, the Laplacian $\De_\Si f^\al$ vanishes identically in $\Si$. 
By a simple computation  we have that 
\begin{eqnarray}\label{laplac}
\De f^\al &=& \De_\Si f^\al + \hs_{f^\al} (\nu,\nu) - H^\Si \lan  \nu ,Df^\al\ran 
%\\&=& \hs_{f^\al_\ep} (v,v) + \si H^\Si |Df^\al_\ep| \ \ \mbox{ in } \Si\nonumber
\end{eqnarray}
Using that $f^\al$ is constant along $\Si$ it follows that $\De_\Si f^\al\equiv 0$ and $Df^\be = \lan Df^\be,\nu\ran \nu$ in $\Si$. Thus, by (\ref{inner}) and (\ref{laplac}), %and using that $\si^2=1$, 
we obtain 
\begin{eqnarray}\label{inner2}
\lan X, \nu \ran &=& U^{\al\be}(\hs_{f^\al} (\nu,\nu)\lan Df^\be,\nu\ran - \hs_{f^\al}(Df^\be,\nu) - H^\Si \lan \nu ,Df^\al\ran\lan Df^\be,\nu\ran ) \nonumber
%\\&=&U^{\al\be}|Df^\be||Df^\al|H^\Si \ \mbox{ in } \Si.
\\&=& -\, U^{\al\be}H^\Si \lan \nu ,Df^\al\ran\lan Df^\be,\nu\ran = -\, U^{\al\be}H^\Si \lan Df^\al, Df^\be\ran %\nonumber
%\\&=& -\, U^{\al\be}(U_{\al\be} - \de_{\al\be})H^\Si \ran = -\,(m - U^{\al\al})H^\Si \nonumber
%\\&=&-\,g(\na f^\al,\na f^\al) H^\Si \mbox{ in } \Si,
\end{eqnarray}
%since $\lan Df^\al,Df^\be\ran = U_{\al\be}-\de_{\al\be}$ and $U^{\al\be}=\de_{\al\be} - g(\na f^\al,\na f^\be)$.
%
%It is simple to see that  $U^{\al\be}|Df^\al||Df^\be|\geq 0$ and the equality holds if and only if $Df^\al=0$ for all $\al$, since $U^{\al\be}=(\lan \eta^\al,\eta^\be\ran)^{-1}$ is a positive defined matrix. 
Again using (\ref{sign-LD}), we write $Df^\al = \la^\al \nu$, in $\Si$, where $\la^\al=\lan Df^\al,\nu \ran$. In particular, $U_{\al\be}=\de_{\al\be}+\la^\al \la^\be$, in $\Si$, for all $\al,\be$. This implies that $U^{\al\be}=\de_{\al\be} - \la^\al\la^\be/(1+|\la^2|)$, in $\Si$, where $|\la|^2=|Df|^2=(\la^1)^2+\ldots + (\la^m)^2$.
Thus, in $\Si$, it holds 
\begin{eqnarray}\label{inverseSigma}
U^{\al\ga}\lan Df^\ga, Df^\al\ran &=& U^{\al\ga}\la^\al\la^\ga = (\de_{\al\be}-\frac{\la^\al\la^\be}{1+|\la|^2})\la^\al\la^\be 
= \frac{|Df|^2}{1+|Df|^2}.
\end{eqnarray}

As in the proof of Theorem \ref{mass-graph}, using that $f$ is an asymptotically flat map we have that $\lim_{r\to \infty}\int_{S_r} \lan X,\frac{x}{|x|}\ran = 2(n-1)\om_{n-1} m_{ADM}$.
By Proposition \ref{scalarcurvature-divX} and  the divergence theorem, we obtain from (\ref{inner2}) and (\ref{inverseSigma}) that 
\begin{eqnarray}
\int_{\real^n-\Om} S+S^\perp &=& \lim_{r\to \infty}\int_{S_r} \lan X,\frac{x}{|x|}\ran + \int_\Si \lan X,\nu\ran \label{pen_afinal}\\ 
&=& 2(n-1)\om_{n-1} m_{ADM} - \int_\Si \frac{|Df|^2}{1+|Df|^2} H^\Si. \nonumber
\end{eqnarray}
Theorem \ref{penrose-graph} is proved.

\section{proof of Theorem \ref{cor-penrose-graph}}
%By a translation we can assume without loss of generality that the map $f=(f^1,\ldots,f^m)$ vanishes along $\p\Om$. 
Let $A:\real^m \to \real^m$ be an isometry that transforms the straight line $\ell$ into the vertical line $A(\ell)=\{(t,0,\ldots,0)\in \real^m\bigm| t\in \real\}$. Consider the map $\bar f=A\circ f:\real^n\setminus\Om\to \real^m$ and let $M_A$ be the graph of $\bar f$ with its natural metric. 
%Let $(A^{\al\ga})$ be the canonical matrix of $A$ and write $\bar e_\al=A e_\al=A^{\al\ga}e_\ga$, for each canonical vector $e_\al\in\real^m$. %Let $B:\real^n\times \real^m\to \real^n\times \real^m$ be the isometry given by $B(e_i,e_\al)=(e_i,\bar e_\al)$.  
Since $\bar\vp(x)=(x,\bar f(x))=(x,f^\al(x) \bar e_\al)$, where $\bar e_\al=A e_\al$, we obtain that $M_A$ is isometric to $M$. This implies that the ADM mass  and scalar curvature  of $M_A$ coincides with $m_{ADM}$ and $S$, respectively. Furthermore, by (\ref{S_perp}), we also obtain that the normal function $S_A^\perp$ (as defined in (\ref{normal-scalar})) coincides with $S^\perp$. Thus, without loss of generality, we can assume  that the straight line $\ell=\real\times \{0\}\subset \real^m$.

First we claim the following
\begin{claim}\label{cl-cylinder} For all $\ga\in \{1,\ldots,m\}$ and $x_0\in \Si$ it holds that $\lim_{x\to x_0}\na f^\ga(x) = \pm(0,\de_{1\ga} e_1)$. In particular, $\lim_{x\to \Si}U^{\al\be}=\de_{\al\be}-\de_{1\al}\de_{1\be}$, for all  $\al,\be$.
\end{claim}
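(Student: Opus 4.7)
The plan is to translate the tangency hypothesis into quantitative asymptotics for the boundary values of $\la^\al:=\lan Df^\al,\nu\ran$, and then feed these into the closed-form expressions for $\na f^\ga$ and $U^{\al\be}$ on $\Si$ that were already derived in the proof of Theorem \ref{penrose-graph}.

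For the geometric step, since $A$ has been chosen so that $\ell=\real e_1$, the cylinder $\Si\times\ell$ has tangent space $T_{x_0}\Si\oplus\real e_1 \subset \real^{n+m}$ at each boundary point $p_0=(x_0,f(x_0))$, and the tangency hypothesis asserts $T_{p_0}\bar M = T_{x_0}\Si\oplus\real e_1$. Since the graph is $C^2$ up to the boundary, $T_pM\to T_{p_0}\bar M$ in the Grassmannian sense as $p\to p_0$; and since $f$ is constant on each connected component of $\Si$, the coordinate tangents $\p_i$ in directions along $\Si$ already supply the $T_{x_0}\Si$-part of the limit. The remaining direction of $T_pM$ is spanned by $\p_\nu=(\nu,\la^\al e_\al)$, whose unit normalization $(\nu,\la^\al e_\al)/\sqrt{1+|\la|^2}$ must tend to the unit vector in $T_{p_0}\bar M$ perpendicular to $T_{x_0}\Si$, namely $\ep(0,e_1)$ for some single sign $\ep=\pm 1$. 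Matching $\real^n$- and $\real^m$-components forces $|\la(x)|\to\infty$ and $\la^\al(x)/|\la(x)|\to\ep\,\de_{1\al}$; equivalently $|\la^1|\to\infty$ and $\la^\al/\la^1\to 0$ for each $\al\neq 1$.

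With these asymptotics in hand the remainder is algebraic. From (\ref{grad-f}) and Lemma \ref{prelim1}(\ref{useful}), $\na f^\ga = U^{\al\ga}f_j^\al\p_j = (U^{\al\ga}Df^\al,\; U^{\al\ga}\lan Df^\al,Df^\be\ran e_\be)$; on $\Si$, substituting $Df^\al=\la^\al\nu$ and the explicit formula $U^{\al\be}=\de_{\al\be}-\la^\al\la^\be/(1+|\la|^2)$ derived in the proof of Theorem \ref{penrose-graph} collapses this to
\begin{equation*}
\na f^\ga|_\Si \,=\, \frac{\la^\ga}{1+|\la|^2}\,(\nu,\,\la^\be e_\be).
\end{equation*}
For $\ga=1$, the $\real^n$-component $\la^1\nu/(1+|\la|^2)\sim \nu/\la^1\to 0$, the $e_1$-coefficient in the $\real^m$-part is $(\la^1)^2/(1+|\la|^2)\to 1$, and the remaining $e_\al$-coefficients satisfy $\la^1\la^\al/(1+|\la|^2)\sim\la^\al/\la^1\to 0$, whence $\na f^1\to(0,e_1)$. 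For $\ga\neq 1$, the prefactor $\la^\ga/(1+|\la|^2)$ is $o(|\la^1|^{-1})$ while $|(\nu,\la^\be e_\be)|=\sqrt{1+|\la|^2}\sim|\la^1|$, so $\na f^\ga\to 0$. The final assertion follows from the same substitution: $\la^\al\la^\be/(1+|\la|^2)\to\de_{1\al}\de_{1\be}$, hence $U^{\al\be}\to\de_{\al\be}-\de_{1\al}\de_{1\be}$.

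The main obstacle is the first step: rigorously extracting the blow-up rate $|\la^1|\to\infty$ with subdominant $\la^\al$ ($\al\neq 1$) from the geometric tangency condition, granted that $Df$ itself may fail to extend continuously to $\Si$ (as in the Schwarzschild model). Once those asymptotics are secured, everything else is a direct manipulation of the on-$\Si$ formulae.
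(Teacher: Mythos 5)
Your overall strategy is genuinely different from the paper's: you work extrinsically with the Euclidean gradient $Df^\al$, encode the tangency condition in asymptotics of the normal components $\la^\al=\lan Df^\al,\nu\ran$, and then substitute into the on-$\Si$ formulas for $U^{\al\be}$ and $\na f^\ga$. The paper instead works intrinsically: since $f^\al$ is the restriction of a coordinate function of $\real^{n+m}$ to the manifold $\bar M$, which by hypothesis is $C^2$ up to its boundary, the graph gradient $\na f^\al$ extends continuously to $\p M$; constancy of $f^\al$ on $\p M$ forces $\na f^\al$ there to be parallel to the conormal, which by the cylinder-tangency is $\pm(0,e_1)$, and the component identity $\na f^\ga=(U^{\ga\al}Df^\al,\,g(\na f^\ga,\na f^\be)e_\be)$ then pins down which index survives. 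This intrinsic route never needs any control on $Df^\al$ near $\Si$, which is precisely where your argument runs into trouble.

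The gap is the one you flag, but it is worse than you suggest: even granting the asymptotics you extract ($|\la|\to\infty$, $\la^\al/|\la|\to\ep\,\de_{1\al}$), the algebraic step does not go through. The identities $Df^\al=\la^\al\nu$ and $U^{\al\be}=\de_{\al\be}-\la^\al\la^\be/(1+|\la|^2)$ hold only \emph{on} $\Si$, where $Df$ need not even be defined (it blows up there); the limit in the Claim is taken from points off $\Si$, where $Df^\al=\la^\al\nu+T^\al$ has a tangential part $T^\al$ that your asymptotics do not control, and which enters $U_{\al\be}=\de_{\al\be}+\la^\al\la^\be+\lan T^\al,T^\be\ran$ at top order if it is large. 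Moreover, convergence of tangent planes does not bound $T^\al$: over a direction $w\perp\nu$ the unit tangent vector of $M$ is $(w,\lan T^\al,w\ran e_\al)$ normalized, and if $|T^1|\to\infty$ this tends to $(0,e_1)$, which already lies in the limit plane $T_{x_0}\Si\oplus\real e_1$ — so the tangency hypothesis is consistent with unbounded tangential derivatives, and your decomposition of $T_pM$ into a ``$T\Si$-part'' plus the single direction $(\nu,\la^\al e_\al)$ is itself predicated on the unproved statement $T^\al\to 0$ (or $T^\al=o(\la^1)$). To close the argument along your lines you would need an additional estimate on $T^\al$ coming from the uniform convergence of $f|_{\Si_\ep}$ to a constant together with the $C^2$-extension hypothesis; the paper's intrinsic formulation is exactly the device that makes such an estimate unnecessary.
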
 
In fact, we fix $x_0\in \Si$. First we assume, by contradiction, that $\lim_{x\to x_0}\na f^\al=0$, for all $\al$. By Item \ref{U-inv-metric} of Lemma \ref{prelim1}  we have that $U^{\al\be}=\de_{\al\be}-g(\na f^\al,\na f^\be)$. Thus we obtain 
\begin{equation}\label{contU}
\lim_{x\to x_0}U^{\al\be}=\de_{\al\be},
\end{equation}
for all $\al,\be$. Using that $\p_i=(e_i,f_i^\be e_\be)$, $\na f^\ga = U^{\ga\al}f_i^\al \p_i$ and $g(\na f^\ga,\na f^\be)=U^{\ga\al}\lan Df^\al,Df^\be\ran$ we have 
\begin{eqnarray}\label{grad-g}
\na f^\ga &=& U^{\ga\al} f_i^\al \p_i = U^{\ga\al}(Df^\al,\lan Df^\al,Df^\be\ran e_\be)\nonumber\\ 
&=& (U^{\ga\al} Df^\al, \,g(\na f^\ga,\na f^\be) e_\be).
\end{eqnarray}
Consider $\pi^1,\pi^2:\real^n\times \real^m\to \real^n$ the orthogonal projections $\pi^1(x,y)=x$ and $\pi^2(x,y)=y$. By (\ref{contU}) and (\ref{grad-g}) we have  $$0=\lim_{x\to x_0}\pi^1_*(\na f^\al) = \lim_{x\to x_0}U^{\al\be}Df^\be=\lim_{x\to x_0}Df^\al,$$ for all $\al$. Thus the graph of $M$ is tangent to the plane $\real^n\times \{0\}$, along $\p M$. This is a contradiction.  Thus, we can set $1\le\ga\le m$  so that $\limsup_{x\to x_0}|\na f^\ga|>0$. 

Using that $M$ is tangent to the cylinder $\Si\times (\real\times \{0\})$ along $\Si=\p M$ we obtain that the vector $\eta=(0,e_1)$ is tangent to $M$ and normal to $\p M$, hence it is a conormal vector field along $\p M$.  Since each connected component of $\Si$ is  a level set of $f^\al$, for all $\al$, and $M$ extends $C^2$ up its boundary, it is easy to see that, for all $\al$, either $\lim_{x\to x_0} |\na f^\al|=0$ or $\lim_{x\to x_0} \na f^\al/|\na f^\al|=\pm\eta$. By using (\ref{grad-g}), that $\eta=(0,e_1)$ and that $\lim_{x\to x_0} \na f^\ga/|\na f^\ga|=\pm\eta$ we obtain
\begin{equation}\label{e1}
\pm e_1 = \lim_{x\to x_0} \pi^2_*(\na f^\ga/|\na f^\ga|)
= \lim_{x\to x_0}g(\na f^\ga/|\na f^\ga|,\na f^\be) e_\be.
\end{equation} 
This implies that 
\begin{equation}\label{si1}
\begin{array}{l}
\lim_{x\to x_0}g(\na f^\ga/|\na f^\ga|,\na f^\be)=0, \mbox{ for all } \be\neq 1;\\
\lim_{x\to x_0}g(\na f^\ga/|\na f^\ga|,\na f^1)=\pm 1.
\end{array}
\end{equation}
If we assume that $\limsup_{x\to x_0}|\na f^\be|>0$, for some $\be\neq 1$ then, by (\ref{si1}), we obtain
\begin{eqnarray*}
0&=& \lim_{x\to x_0}g({\na f^\ga}/{|\na f^\ga|},\na f^\be) = \limsup_{x\to x_0} g\big({\na f^\ga}/{|\na f^\ga|},{\na f^\be}/{|\na f^\be|}\big)|\na f^\be| \\&=&\pm g(\eta,\eta)\limsup_{x\to x_0}|\na f^\be| =\pm\limsup_{x\to x_0}|\na f^\be|,
\end{eqnarray*}
which is a contradiction. Thus, it holds that 
\begin{equation}\label{resu1}
\lim_{x\to\Si}\na f^\be=0, \mbox{ for all } \be\neq 1,
\end{equation}
We conclude that $\ga=1$, which implies that $\pm\eta = \lim_{x\to x_0}\na f^1/|\na f^1|$. Moreover, again using (\ref{si1}), we obtain that $\lim_{x\to \Si}|\na f^1|=\lim_{x\to \Si}g(\na f^1/|\na f^1|,\na f^1)=1$. This implies that $\lim_{x\to x_0}\na f^1=\pm (0,e_1)$. Claim \ref{cl-cylinder} is proved. 

The following claim follows as a consequence of Claim \ref{cl-cylinder}.
\begin{claim}\label{lim_infty} $\lim_{x\to\Si}  Df^\al = 0$, for all  $\al\neq 1$, and $\lim_{x\to \Si} |Df^1|=+\infty$.
\end{claim}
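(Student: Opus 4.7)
The plan is to translate Claim~\ref{cl-cylinder}, which describes the limit of $U^{-1}$, into information about the Euclidean gradients $Df^\al$ via the spectral structure of $U$. Let $\mk D\in\real^{m\times n}$ be the matrix whose rows are $Df^\al$, so $U=I_m+\mk D\mk D^\top$, and consider the singular value decomposition $\mk D=P\Sigma Q^\top$ with left-singular vectors $p_k\in\real^m$ (the columns of $P$) and singular values $\si_1\ge\cdots\ge\si_{\min(m,n)}\ge 0$. Then $U=P(I+\Sigma\Sigma^\top)P^\top$ has eigenvalues $1+\si_k^2$ with eigenvectors $p_k$. The limit $U^{-1}\to\mathrm{diag}(0,1,\ldots,1)$ provided by Claim~\ref{cl-cylinder} forces exactly one singular value to blow up, $\si_1\to\infty$, with its left-singular vector $p_1\to e_1\in\real^m$, while $\si_k\to 0$ for $k\ge 2$.

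The second assertion follows immediately: $|Df^1|^2=(\mk D\mk D^\top)_{11}=\sum_k\si_k^2(p_k^{(1)})^2\ge\si_1^2(p_1^{(1)})^2\to\infty$, hence $|Df^1|\to+\infty$. For the first assertion, writing $|Df^\al|^2=\sum_k\si_k^2(p_k^{(\al)})^2$ for $\al\ne 1$, the terms with $k\ge 2$ vanish as $\si_k\to 0$, but the leading term $\si_1^2(p_1^{(\al)})^2$ is an indeterminate $\infty\cdot 0$ product (since $\si_1\to\infty$ and $p_1^{(\al)}\to 0$) that the spectral data alone does not pin down.

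To close this gap I would exploit the $C^2$ regularity of $\bar M$ near the boundary. Choose a local parametrization $\vp:V\times[0,\ep)\to\bar M$, $\vp(y,t)=(\xi(y,t),\eta(y,t))$, with $\eta(y,0)\equiv c$ and $\p_t\vp|_{t=0}=(0,e_1)$, which is possible since $(0,e_1)$ is tangent to $\bar M$ and transverse to $\p M$. Then $\xi_t(y,0)=0$, $\eta_t^\al(y,0)=\de_{1\al}$, and since $\xi$ exits into $\real^n\setminus\bar\Om$ as $t$ grows, $\xi_{tt}(y,0)=a\nu+b$ with $a>0$ and $b\in T\Si$. Differentiating $f\circ\xi=\eta$ in the tangential directions $y_j$ and letting $t\to 0^+$ shows that the $T\Si$-component $w^\al$ of $Df^\al$ vanishes in the limit; differentiating in $t$ and using $\xi_t(y,t)=t(a\nu+b)+o(t)$ yields the asymptotic
\begin{equation*}
t\bigl(a\la^\al+w^\al\cdot b\bigr)+Df^\al\cdot o(t)\;\longrightarrow\;\de_{1\al},\qquad \la^\al:=\lan Df^\al,\nu\ran.
\end{equation*}
For $\al=1$ this pins $\la^1\sim 1/(at)\to\infty$, and combined with the tangential decay we recover $|Df^1|\to\infty$. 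The main obstacle is the case $\al\ne 1$: the identity yields only $a\la^\al t+Df^\al\cdot o(t)\to 0$, which a priori permits $\la^\al$ to diverge at sub-linear rates, and the remainder $Df^\al\cdot o(t)$ must be controlled. I expect that closing this step --- perhaps by differentiating $f\circ\xi=\eta$ a second time in $t$ to read off $\lim Df^\al\cdot(a\nu+b)$ via $\eta^\al_{tt}$, or by refining the rate of convergence $p_1\to e_1$ versus $\si_1\to\infty$ from Claim~\ref{cl-cylinder} --- is the most delicate part of the argument.
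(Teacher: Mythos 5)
Your spectral argument for the second assertion is correct, and it is a genuinely different route from the paper's: the paper deduces from (\ref{grad-g}) and Claim \ref{cl-cylinder} that $U^{1\al}Df^\al \to 0$ while $\lan U^{1\al}Df^\al, Df^1\ran = g(\na f^1,\na f^1)\to 1$, whence $|Df^1|\to\infty$ by Cauchy--Schwarz. Your singular--value decomposition of $\mk D$ reaches the same conclusion with more machinery but no error, and it has the virtue of making completely transparent exactly which quantities are controlled by Claim \ref{cl-cylinder} and which are not.

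The genuine gap is the one you flagged yourself: the first assertion, $Df^\al\to 0$ for $\al\neq 1$, is never established, and your parametrization sketch does not close it. Two remarks. First, the paper's own proof is no better on this point: it only records $U^{\ga\al}Df^\al\to 0$ for all $\ga$, and for $\ga\neq 1$ the passage from $U^{\ga\al}Df^\al\to 0$ to $Df^\ga\to 0$ founders on precisely the indeterminate product you identified, here appearing as the cross term $U^{\ga 1}Df^1$, which is of the form $0\cdot\infty$. Second, and more seriously, the indeterminacy is not an artifact of either method: the assertion fails under the stated hypotheses. Take $\Om$ the unit ball, $\Si=S^{n-1}$, and near $\Si$ set $f^1=\sqrt{|x|-1}$ and $f^2=c(|x|-1)$ with $c\neq 0$ (suitably modified away from $\Si$ to be asymptotically flat). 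Reparametrizing by $s=\sqrt{|x|-1}$, the closure of the graph is the image of the $C^\infty$ embedding $(s,\te)\mapsto\bigl((1+s^2)\te,\,s,\,cs^2\bigr)$, which at $s=0$ is tangent to the cylinder $\Si\times(\real\times\{0\})$; both gradients are radial, so $S^\perp\equiv 0$ near $\Si$ by (\ref{S_perp}); Claim \ref{cl-cylinder} holds; and yet $Df^2\to c\,\nu\neq 0$. So all hypotheses of Theorem \ref{cor-penrose-graph} are met while the first assertion of the claim fails. No completion of your argument can succeed without strengthening the hypotheses (for instance, requiring $\bar M$ to osculate the cylinder to second order along $\p M$, which in the example forces $c=0$) or weakening the conclusion to a statement such as $U^{\ga\al}Df^\al\to 0$, which is what the subsequent approximation argument would then have to live with.
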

In fact, it follows from (\ref{grad-g}) together with Claim \ref{cl-cylinder}  that $\lim_{x\to \Si}U^{\ga\al}Df^\al = 0$, for all  $\ga$. Thus, since  $1=\lim_{x\to \Si}g(\na f^1,\na f^1)=\lim_{x\to \Si}U^{1\al}\lan Df^\al,Df^1\ran$ we have that $\lim_{x\to \Si} |Df^1|=+\infty$. Claim \ref{lim_infty} is proved.   

Let $F^k=(f^{1;k},f^{2;k},\ldots,f^{m;k}):\real^n\setminus\Om\to \real^m$, with $k=1,2,\ldots$, be a sequence of smooth maps satisfying:
\begin{enumerate}[(i)]
\item\label{coincides} $F^{k}$ coincides with $f$ outside a compact subset containing $\Si$;
\item\label{supp-compact} $F^k=0$ everywhere in $\Si$ and moreover the map $(f^{2;k},\ldots,f^{m;k})$ vanishes in a neighborhood $U_k\subset \real^n\setminus\Om$ of the boundary $\p\Om$ with $\m L^n$-measure $|U_k|\to 0$, when $k\to \infty$;
\item\label{convergesC2} if $M_k$ is the graph of $f_k$ with its natural metric then the closure $\bar M_k$ converges to $\bar M$ with respect to the $C^2$-topology.
\end{enumerate}
Note that Theorem \ref{penrose-graph} applies for $F^k:\real^n\setminus\Om\to \real^m$. By using \ref{coincides}, the ADM-mass of $M_k$ coincides with the ADM-mass of $M$. By using \ref{convergesC2} and (\ref{scalar-expression1}), for all $x\in \real^n\setminus\Om$, the scalar curvature $S_k:\real^n\setminus\Om\to \real$ of the graph $\bar M_k$ satisfies $\lim_{k\to\infty} S_k(x)=S(x)$. By using (\ref{S_perp}), we have that $S_k^\perp$ converges to $S^\perp$ in $\real^n\setminus\bar\Om$. By using \ref{supp-compact}, the function $S^\perp_k$  vanishes in a neighborhood of $\Si=\p\Om$, hence $S^\perp_k$ also converge to $S^\perp$ in the points of $\Si$. We obtain that $S_k+S^\perp_k$ converges uniformly to $S+S^\perp$ in $\real^n\setminus\Om$. Thus, by Theorem \ref{penrose-graph}, we have
\begin{eqnarray*}
m_{ADM}&=&\frac{1}{2(n-1)\om_{n-1}}\lim_{k\to\infty}\left(\int_{\real^n\setminus\Om}(S_k+S^\perp_k) + \int_\Si \frac{|DF^k|^2}{1+|DF^k|^2}H^\Si\right)\\
&=& \frac{1}{2(n-1)\om_{n-1}}\left(\int_{\real^n\setminus\Om}(S+S^\perp) + \int_\Si H^\Si\right)
\end{eqnarray*}
Theorem \ref{cor-penrose-graph} is proved.

\section*{Aknowledgement}
The author are very grateful to Professor Fernando Cod\'a Marques for your suggestions and comments. 
%%%%%%%%%%%%%%%%%%%%%%%%%%%%%REFERENCIAS

\vspace{1.5cm}
\begin{small}
\begin{tabular}{lcr}

\begin{tabular}{l}
Heudson Mirandola\\
Universidade Federal do Rio de Janeiro\\
Instituto de Matem\'{a}tica\\
21945-970 Rio de Janeiro-RJ\\
Brazil\\
\verb+mirandola@im.ufrj.br+\\
\end{tabular}\
&\quad&
\begin{tabular}{l}
Feliciano Vit\'orio\\
Universidade Federal de Alagoas\\
Instituto de Matem\'{a}tica\\
57072-900 Macei\'o-AL\\
Brazil\\
\verb+feliciano@pos.mat.ufal.br+\\
\end{tabular}\

\end{tabular}
\end{small}
\end{document}